\def\x{{\bf x}}
\def\x{{\mathbf x}}
\def\x{{\bf x}}
\def\d{{\bf d}}
\def\h{{\bf h}}
\def\diag{{\rm diag}\,}
\def\be{\begin{equation}}
\def\ee{\end{equation}}
\def\ba{\left[\begin{array}}
\def\ea{\end{array}\right]}
\def\x{{\bf x}}
\def\d{{\bf d}}
\def\1{{\bf 1}}
\def\diag{{\mbox{diag}}}
\def\g{{\bf g}}
\def\0{{\bf 0}}
\newtheorem{theorem}{Theorem}
\newtheorem{lemma}{Lemma}
\begin{document}

\begin{singlespace}

\title {Spherical perceptron as a storage memory with limited errors %A tight variant of Gordon's escape through a mesh theorem
%\footnote{ This work was supported in
%part.}
}
\author{
\textsc{Mihailo Stojnic}
\\
\\
{School of Industrial Engineering}\\
{Purdue University, West Lafayette, IN 47907} \\
{e-mail: {\tt mstojnic@purdue.edu}} }
\date{}
\maketitle

\centerline{{\bf Abstract}} \vspace*{0.1in}

It has been known for a long time that the classical spherical perceptrons can be used as storage memories. Seminal work of Gardner, \cite{Gar88}, started an analytical study of perceptrons storage abilities. Many of the Gardner's predictions obtained through statistical mechanics tools have been rigorously justified. Among the most important ones are of course the storage capacities. The first rigorous confirmations were obtained in \cite{SchTir02,SchTir03} for the storage capacity of the so-called positive spherical perceptron. These were later reestablished in \cite{TalBook} and a bit more recently in \cite{StojnicGardGen13}. In this paper we consider a variant of the spherical perceptron that operates as a storage memory but allows for a certain fraction of errors. In Gardner's original work the statistical mechanics predictions in this directions were presented sa well. Here, through a mathematically rigorous analysis, we confirm that the Gardner's predictions in this direction are in fact provable upper bounds on the true values of the storage capacity. Moreover, we then present a mechanism that can be used to lower these bounds. Numerical results that we present indicate that the Garnder's storage capacity predictions may, in a fairly wide range of parameters, be not that far away from the true values.

\vspace*{0.25in} \noindent {\bf Index Terms: Perceptron with errors; storage capacity}.

\end{singlespace}

%%%%%%%%%%%%%%%%%%%%%%%%%%%%%%%%%%%%%%%%%%%%%%%%%%%%%%%%%%%%%%%%%
\section{Introduction}
\label{sec:back}
%%%%%%%%%%%%%%%%%%%%%%%%%%%%%%%%%%%%%%%%%%%%%%%%%%%%%%%%%%%%%%%%%

In this paper we will study a special type of the classical spherical perceptron problem. Of course, spherical perceptrons are a well studied class of problems with applications in various fields, ranging from neural networks and statistical physics/mechanics to high-dimensional geometry and biology. While the spherical perceptron like problems had been known for a long time (for various mathematical versions see, e.g. \cite{Schlafli,Cover65,Winder,Winder61,Wendel62,Cameron60,Joseph60,BalVen87,Ven86}), it is probably the work of Gardner \cite{Gar88} that brought them in the research spotlight. One would be inclined to believe that the main reason for that was Gardner's ability to quantify many of the features of the spherical perceptrons that were not so easy to handle through the standard mathematical tools typically used in earlier works. Namely, in \cite{Gar88}, Gardner introduced a fairly neat type of analysis based on a statistical mechanics approach typically called the replica theory. As a result she was able to quantify almost any of the spherical perceptrons typical features of interest. While some of the results she obtained were known (for example, the storage capacity with zero-thresholds, see, e.g. \cite{Schlafli,Cover65,Winder,Winder61,Wendel62,Cameron60,Joseph60,BalVen87,Ven86}) many other ones were not (storage capacity with non-zero thresholds, typical volume of interactions strengths for which the memory functions properly, the storage capacities of memories with errors, and so on). Moreover, many of the results that she obtained remained as mathematical conjectures (either in the form of those related to quantities which are believed to be the exact predictions or in the form of those related to quantities which may be solid approximations). In recent years some of those that had been believed to be exact have indeed been rigorously proved (see, e.g. \cite{SchTir02,SchTir03,TalBook,StojnicGardGen13}) whereas many of those that are believed to be solid approximations have been shown to be at the very least rigorous bounds (see, e.g. \cite{StojnicGardGen13,StojnicGardSphNeg13}).

In this paper we will also look at one of the features of the spherical perceptron. The quantity that we will be interested in this paper in particular is fairly closely related to the well-known storage capacity. Namely, we will indeed attempt to evaluate the storage capacity of the spherical perceptron, however, instead of insisting that all the patterns should be memorized correctly we will also allow for a certain fraction of errors. In other words, we we will allow that a certain fraction of patterns can in fact be memorized incorrectly. Throughout the paper, we will often refer to the capacity of such a memory as the storage capacity with errors. Of course, this problem was already studied in \cite{Gar88} and a nice set of observations related to it has already been made there. Here, we will through a mathematically rigorous analysis attempt to confirm many of them.

Before going into the details of our approach we will recall on the basic definitions related to the spherical perceptron and needed for its analysis. Also, to make the presentation easier to follow we find it useful to briefly sketch how the rest of the paper is organized. In Section \ref{sec:mathsetupper} we will, as mentioned above, introduce a more formal mathematical description of how a perceptron operates. In Section \ref{sec:knownres} we will present several results that are known for the classical spherical perceptron. In Section \ref{sec:sphpererrors} we will discuss the storage capacity when the errors are allowed. We will recall on the known results and later on in Section \ref{sec:sphpererrorsrig} present a powerful mechanism that can be used to prove that many of the known results are actually rigorous bounds on the quantities of interest. In Section \ref{sec:sphpererrorslow} we will then present a further refinement of the mechanism from Section \ref{sec:sphpererrorsrig} that can be used to potentially lower the values of the storage capacity obtained in Section \ref{sec:sphpererrorsrig}. Finally, in Section \ref{sec:conc} we will discuss obtained results and present several concluding remarks.

%%%%%%%%%%%%%%%%%%%%%%%%%%%%%%%%%%%%%%%%%%%%%%%%%%%%%%%%%%%%%%%%%
\section{Mathematical setup of a perceptron}
\label{sec:mathsetupper}
%%%%%%%%%%%%%%%%%%%%%%%%%%%%%%%%%%%%%%%%%%%%%%%%%%%%%%%%%%%%%%%%%

To make this part of the presentation easier to follow we will try to introduce all important features of the spherical perceptron that we will need here by closely following what was done in \cite{Gar88} (and for that matter in our recent work \cite{StojnicGardGen13,StojnicGardSphNeg13}). So, as in \cite{Gar88}, we start with the following dynamics:
\begin{equation}
H_{ik}^{(t+1)}=\mbox{sign} (\sum_{j=1,j\neq k}^{n}H_{ij}^{(t)}X_{jk}-T_{ik}).\label{eq:defdyn}
\end{equation}
Following \cite{Gar88} for any fixed $1\leq i\leq m$ we will call each $H_{ij},1 \leq j\leq n $, the icing spin, i.e. $H_{ij}\in\{-1,1\},\forall i,j$. Continuing further with following \cite{Gar88}, we will call $X_{jk},1\leq j\leq n$, the interaction strength for the bond from site $j$ to site $i$. To be in a complete agreement with \cite{Gar88}, we in (\ref{eq:defdyn}) also introduced quantities $T_{ik},1\leq i\leq m,1\leq k\leq n$. $T_{ik}$is typically called the threshold for site $k$ in pattern $i$. However, to make the presentation easier to follow, we will typically assume that $T_{ik}=0$. Without going into further details we will mention though that all the results that we will present below can be easily modified so that they include scenarios where $T_{ik}\neq 0$.

Now, the dynamics presented in (\ref{eq:defdyn}) works by moving from a $t$ to $t+1$ and so on (of course one assumes an initial configuration for say $t=0$). Moreover, the above dynamics will have a fixed point if say there are strengths $X_{jk},1\leq j\leq n,1\leq k\leq m$, such that for any $1\leq i\leq m$
\begin{eqnarray}
& & H_{ik}\mbox{sign} (\sum_{j=1,j\neq k}^{n}H_{ij}X_{jk}-T_{ik})=1\nonumber \\
& \Leftrightarrow & H_{ik}(\sum_{j=1,j\neq k}^{n}H_{ij}X_{jk}-T_{ik})>0,1\leq j\leq n,1\leq k\leq n.\label{eq:defdynfp}
\end{eqnarray}
Of course, the above is a well known property of a very general class of dynamics. In other words, unless one specifies the interaction strengths the generality of the problem essentially makes it easy. After considering the general scenario introduced above, \cite{Gar88} then proceeded and specialized it to a particular case which amounts to including spherical restrictions on $X$. A more mathematical description of such restrictions considered in \cite{Gar88} essentially boils down to the following constraints
\begin{equation}
\sum_{j=1}^{n}X_{ji}^2=1,1\leq i\leq n.\label{eq:cosntX}
\end{equation}
The fundamental question that one typically considers then is the so-called storage capacity of the above dynamics or alternatively a neural network that it would represent (of course this is exactly one of the questions considered in \cite{Gar88}). Namely, one then asks how many patterns $m$ ($i$-th pattern being $H_{ij},1\leq j\leq n$) one can store so that there is an assurance that they are stored in a stable way. Moreover, since having patterns being fixed points of the above introduced dynamics is not enough to insure having a finite basin of attraction one often may impose a bit stronger threshold condition
\begin{eqnarray}
& & H_{ik}\mbox{sign} (\sum_{j=1,j\neq k}^{n}H_{ij}X_{jk}-T_{ik})=1\nonumber \\
& \Leftrightarrow & H_{ik}(\sum_{j=1,j\neq k}^{n}H_{ij}X_{jk}-T_{ik})>\kappa,1\leq j\leq n,1\leq k\leq n,\label{eq:defdynfpstr}
\end{eqnarray}
where typically $\kappa$ is a positive number. We will refer to a perceptron governed by the above dynamics and coupled with the spherical restrictions and a positive threshold $\kappa$ as the positive spherical perceptron. Alternatively, when $\kappa$ is negative we will refer to it as the negative spherical perceptron (such a perceptron may be more of an interest from a purely mathematical point of view rather than as a neural network concept; nevertheless we will view it as an interesting mathematical problem; consequently, we will on occasion, in addition to the results that we will present for the standard positive perceptron, present quite a few results related to the negative case as well).

Also, we should mentioned that beyond the above mentioned negative case many other variants of the model that we study here are possible from a purely mathematical perspective. Moreover, many of them have found applications in various other fields as well. For example, a nice set of references that contains a collection of results related to various aspects of different neural networks models and their bio- and many other applications is  \cite{AgiAnnBarCooTan13a,AgiAnnBarCooTan13b,AgiBarBarGalGueMoa12,AgiBarGalGueMoa12,AgiAstBarBurUgu12,StojnicAsymmLittBnds11,BruParRit92}.

%%%%%%%%%%%%%%%%%%%%%%%%%%%%%%%%%%%%%%%%%%%%%%%%%%%%%%%%%%%%%%%%%
\section{Standard spherical perceptron --  known results}
\label{sec:knownres}
%%%%%%%%%%%%%%%%%%%%%%%%%%%%%%%%%%%%%%%%%%%%%%%%%%%%%%%%%%%%%%%%%

As mentioned above, our main interest in this paper will be a particular type of the spherical perceptron, namely the one that functions as a memory with a limited fraction of errors. However, before proceeding with the problem that we will study here in great detail we find it useful to first recall on several results known for the standard spherical perceptron, i.e. the one that functions as a storage memory without errors. That way it will be easier to properly position the results we intend to present here within the scope of what is already known.

%%%%%%%%%%%%%%%%%%%%%%%%%%%%%%%%%%%%%%%%%%%%%%%%%%%%%%%%%%%%%%%%%
\subsection{Statistical mechanics}
\label{sec:statmech}
%%%%%%%%%%%%%%%%%%%%%%%%%%%%%%%%%%%%%%%%%%%%%%%%%%%%%%%%%%%%%%%%%

We of course start with recalling on what was presented in \cite{Gar88}. In \cite{Gar88} a replica type of approach was designed and based on it a characterization of the storage capacity was presented. Before showing what exactly such a characterization looks like we will first formally define it. Namely, throughout the paper we will assume the so-called linear regime, i.e. we will consider the so-called \emph{linear} scenario where the length and the number of different patterns, $n$ and $m$, respectively are large but proportional to each other. Moreover, we will denote the proportionality ratio by $\alpha$ (where $\alpha$ obviously is a constant independent of $n$) and will set
\begin{equation}
m=\alpha n.\label{eq:defmnalpha}
\end{equation}
Now, assuming that $H_{ij},1\leq i\leq m,1\leq j\leq n$, are i.i.d. symmetric Bernoulli random variables, \cite{Gar88}, using the replica approach, gave the following estimate for $\alpha$ so that (\ref{eq:defdynfpstr}) holds with overwhelming probability (under overwhelming probability we will in this paper assume a probability that is no more than a number exponentially decaying in $n$ away from $1$)
\begin{equation}
\alpha_c(\kappa)=(\frac{1}{\sqrt{2\pi}}\int_{-\kappa}^{\infty}(z+\kappa)^2e^{-\frac{z^2}{2}}dz)^{-1}.\label{eq:garstorcap}
\end{equation}
Based on the above characterization one then has that $\alpha_c$ achieves its maximum over positive $\kappa$'s as $\kappa\rightarrow 0$. One in fact easily then has
\begin{equation}
\lim_{\kappa\rightarrow 0}\alpha_c(\kappa)=2.\label{eq:garstorcapk0}
\end{equation}
Also, to be completely exact, in \cite{Gar88}, it was predicted that the storage capacity relation from (\ref{eq:garstorcap}) holds for the range $\kappa\geq 0$.

%%%%%%%%%%%%%%%%%%%%%%%%%%%%%%%%%%%%%%%%%%%%%%%%%%%%%%%%%%%%%%%%%
\subsection{Rigorous results -- positive spherical perceptron ($\kappa\geq 0$)}
\label{sec:posphper}
%%%%%%%%%%%%%%%%%%%%%%%%%%%%%%%%%%%%%%%%%%%%%%%%%%%%%%%%%%%%%%%%%

The result given in (\ref{eq:garstorcapk0}) is of course well known and has been rigorously established either as a pure mathematical fact or even in the context of neural networks and pattern recognition \cite{Schlafli,Cover65,Winder,Winder61,Wendel62,Cameron60,Joseph60,BalVen87,Ven86}. In a more recent work \cite{SchTir02,SchTir03,TalBook} the authors also considered the storage capacity of the spherical perceptron and established that when $\kappa\geq 0$ (\ref{eq:garstorcap}) also holds. In our own work \cite{StojnicGardGen13} we revisited the storage capacity problems and presented an alternative mathematical approach that was also powerful enough to reestablish the storage capacity prediction given in (\ref{eq:garstorcap}). We below formalize the results obtained in \cite{SchTir02,SchTir03,TalBook,StojnicGardGen13}.

\begin{theorem} \cite{SchTir02,SchTir03,TalBook,StojnicGardGen13}
Let $H$ be an $m\times n$ matrix with $\{-1,1\}$ i.i.d.Bernoulli components. Let $n$ be large and let $m=\alpha n$, where $\alpha>0$ is a constant independent of $n$. Let $\alpha_c$ be as in (\ref{eq:garstorcap}) and let $\kappa\geq 0$ be a scalar constant independent of $n$. If $\alpha>\alpha_c$ then with overwhelming probability there will be no $\x$ such that $\|\x\|_2=1$ and (\ref{eq:defdynfpstr}) is feasible. On the other hand, if $\alpha<\alpha_c$ then with overwhelming probability there will be an $\x$ such that $\|\x\|_2=1$ and (\ref{eq:defdynfpstr}) is feasible.
\label{thm:SchTirTalSto}
\end{theorem}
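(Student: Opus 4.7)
The plan is to recast (\ref{eq:defdynfpstr}) as the sign of a Gordon-type Gaussian saddle value, analyze its transition via Gordon's comparison inequality, and then upgrade to the Bernoulli model by universality. First, by the row-sign symmetry of the Bernoulli matrix (the products $H_{ik}H_{ij}$ for $i=1,\dots,m$ and $j\neq k$ are i.i.d.\ $\pm 1$), the outer $H_{ik}$ factors can be absorbed, so for a fixed column $k$ the feasibility of (\ref{eq:defdynfpstr}) reduces to the existence of $\x\in\mathbb{R}^n$ with $\|\x\|_2=1$ and $(H\x)_i\geq \kappa$ for all $i=1,\dots,m$. Writing the infeasibility gap as
\begin{equation*}
\phi_n(H) \;:=\; \min_{\|\x\|_2=1}\;\max_{\y\in{\cal S}}\;\y^T(\kappa\1-H\x),\qquad {\cal S}:=\{\y\in\mathbb{R}^m:\y\geq 0,\;\|\y\|_2\leq 1\},
\end{equation*}
one has $\phi_n(H)\geq 0$ always, with feasibility equivalent to $\phi_n(H)=0$. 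The theorem therefore reduces to showing that $\phi_n(H)$ transitions from $0$ to strictly positive as $\alpha$ crosses $\alpha_c(\kappa)$, with overwhelming probability on each side.

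The main tool is Gordon's Gaussian minimax inequality. I would first replace $H$ by a matrix $G$ with i.i.d.\ standard Gaussian entries: because $\phi_n$ depends on $H$ only through the bilinear pairing $\y^T H\x$, the Bernoulli-to-Gaussian swap incurs only a negligible cost via a Lindeberg-type universality argument along the lines of \cite{StojnicGardGen13}. Applied to $\phi_n(G)$, Gordon's comparison (through the careful matching analysis developed in \cite{StojnicGardGen13}) yields two-sided bounds in terms of a decoupled auxiliary process driven by independent Gaussians $\g\in\mathbb{R}^m$ and $\h\in\mathbb{R}^n$. After separating $\x$ and $\y$, the inner optimizations produce $\|(\kappa\1-\g)_+\|_2$ (from the $\y$-side, using the positive-orthant structure of ${\cal S}$) and $\|\h\|_2$ (from the $\x$-side, by Cauchy--Schwarz), leaving the explicit description $\phi_n(G)\sim(\|(\kappa\1-\g)_+\|_2-\|\h\|_2)_+$. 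Passing to the limit with $\|\h\|_2/\sqrt{n}\to 1$, $m=\alpha n$, and the law of large numbers for $\tfrac{1}{m}\sum_i(\kappa-g_i)_+^2$, this vanishes precisely when
\begin{equation*}
\alpha\cdot\frac{1}{\sqrt{2\pi}}\int_{-\kappa}^{\infty}(z+\kappa)^2\,e^{-z^2/2}\,dz\;\leq\;1,
\end{equation*}
i.e.\ $\alpha\leq\alpha_c(\kappa)$ of (\ref{eq:garstorcap}); in the opposite regime $\phi_n(G)/\sqrt{n}$ is bounded away from $0$. Gaussian Lipschitz concentration of $\phi_n(G)$ around its mean, together with the universality transfer back to Bernoulli $H$, then delivers the claimed ``overwhelming probability'' statements.

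The principal obstacle, and the reason a soft argument will not suffice, is obtaining the \emph{two-sided} sharpness of Gordon's comparison: the raw inequality is a priori one-sided, yet matched bounds on both sides are needed so that the single constant $\alpha_c(\kappa)$ governs both the feasibility and infeasibility regimes. The hypothesis $\kappa\geq 0$ is exactly what supplies the convex structure required for this matching (the map $\x\mapsto\|(\kappa\1-H\x)_+\|_2$ is convex and the maximizing set ${\cal S}$ is convex); without it only one direction of Gordon's bound propagates through, and the negative spherical perceptron (discussed later in the paper) consequently requires more delicate analysis.
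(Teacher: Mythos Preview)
The paper does not prove this theorem; its proof reads in full ``Presented in various forms in \cite{SchTir02,SchTir03,TalBook,StojnicGardGen13}.'' Your sketch follows precisely the Gordon--comparison route of \cite{StojnicGardGen13}, which the paper itself recapitulates in Theorem~\ref{thm:Stoc30} and Lemma~\ref{lemma:negproblemma}: rewrite feasibility as the sign of a min--max, apply Gordon to decouple into $\|(\g+\kappa\1)_+\|_2$ and $\|\h\|_2$, and read off the threshold $\alpha f_{gar}(\kappa)=1$. So the strategy is correct and is essentially the paper's (cited) approach.

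One point deserves tightening. Your explanation of why $\kappa\geq 0$ yields the \emph{two}-sided conclusion is not quite right as written: the convexity of $\x\mapsto\|(\kappa\1-H\x)_+\|_2$ and of ${\cal S}$ holds for every $\kappa$, so that alone cannot be what distinguishes the positive case. The genuine obstruction is that the minimization runs over the \emph{sphere} $\|\x\|_2=1$, which is non-convex; a convex Gordon/CGMT argument needs the relaxation to the ball $\|\x\|_2\leq 1$, and one must then check that the relaxed optimum is attained on the boundary. It is this last step (or, equivalently, the separate feasibility-direction argument carried out in \cite{StojnicGardGen13,SchTir02,SchTir03,TalBook}) that uses $\kappa\geq 0$ in an essential way and fails for $\kappa<0$, which is exactly why Theorem~\ref{thm:Stoc30} gives only a one-sided bound in the negative case. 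Your identification of the obstacle is correct; the stated mechanism should be adjusted.
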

\begin{proof}
Presented in various forms in \cite{SchTir02,SchTir03,TalBook,StojnicGardGen13}.
\end{proof}

As mentioned earlier, the results given in the above theorem essentially settle the storage capacity of the positive spherical perceptron or the Gardner problem. However, there are a couple of facts that should be pointed out (emphasized):

1) The results presented above relate to the \emph{positive} spherical perceptron. It is not clear at all if they would automatically translate to the case of the negative spherical perceptron. As we hinted earlier, the case of the negative spherical perceptron ($\kappa<0$) may be more of interest from a purely mathematical point of view than it is from say the neural networks point of view. Nevertheless, such a mathematical problem may turn out to be a bit harder than the one corresponding to the standard positive case. In fact, in \cite{TalBook}, Talagrand conjectured (conjecture 8.4.4) that the above mentioned $\alpha_c$ remains an upper bound on the storage capacity even when $\kappa<0$, i.e. even in the case of the negative spherical perceptron. However, he does seem to leave it as an open problem what the exact value of the storage capacity in the negative case should be. In our own work \cite{StojnicGardGen13} we confirmed this Talagrand's conjecture and showed that even in the negative case $\alpha_c$ from (\ref{eq:garstorcap}) is indeed an upper bound on the storage capacity.

2) It is rather clear but we do mention that the overwhelming probability statement in the above theorem is taken with respect to the randomness of $H$. To analyze the feasibility of (\ref{eq:defprobucor1}) we in \cite{StojnicGardGen13} relied on a mechanism we recently developed for studying various optimization problems in \cite{StojnicRegRndDlt10}. Such a mechanism works for various types of randomness. However, the easiest way to present it was assuming that the underlying randomness is standard normal. So to fit the feasibility of (\ref{eq:defprobucor1}) into the framework of \cite{StojnicRegRndDlt10} we in \cite{StojnicGardGen13} formally assumed that the elements of matrix $H$ are i.i.d. standard normals. In that regard then what was proved in \cite{StojnicGardGen13} is a bit different from what was stated in the above theorem. However, as mentioned in \cite{StojnicGardGen13} (and in more detail in \cite{StojnicRegRndDlt10,StojnicMoreSophHopBnds10}) all our results from \cite{StojnicGardGen13} continue to hold for a very large set of types of randomness and certainly for the Bernouilli one assumed in Theorem \ref{thm:SchTirTalSto}.

3) We will continue to call the critical value of $\alpha$ so that (\ref{eq:defdynfpstr}) is feasible the storage capacity even when $\kappa<0$, even though it may be linguistically a bit incorrect, given the neural network interpretation of finite basins of attraction mentioned above.

%%%%%%%%%%%%%%%%%%%%%%%%%%%%%%%%%%%%%%%%%%%%%%%%%%%%%%%%%%%%%%%%%
\subsection{Rigorous results -- negative spherical perceptron ($\kappa< 0$)}
\label{sec:negphper}
%%%%%%%%%%%%%%%%%%%%%%%%%%%%%%%%%%%%%%%%%%%%%%%%%%%%%%%%%%%%%%%%%

In our recent work \cite{StojnicGardSphNeg13} we went a step further and considered the negative version of the standard spherical perceptron. While the results that we will present later on in Sections \ref{sec:sphpererrorsrig} and \ref{sec:sphpererrorslow} will be valid for any $\kappa$ our main concern will be from a neural network point of view and as such will be related to the positive case, i.e. to $\kappa\geq 0$ scenario. In that regard the results that we review in this subsection may seem as not as important as those from the previous subsections. However, once we present the main results in Sections \ref{sec:sphpererrorsrig} and \ref{sec:sphpererrorslow} it will be clear that there is an interesting conceptual similarity that is deeply rooted in a combinatorial similarity of what we will present in this subsection (and what was essentially proved in \cite{StojnicGardGen13,StojnicGardSphNeg13}) and the results that we will present in Sections \ref{sec:sphpererrorsrig} and \ref{sec:sphpererrorslow}.

As mentioned above under point 3), we in \cite{StojnicGardSphNeg13} called the corresponding limiting $\alpha$ in $\kappa<0$ case the storage capacity of the negative spherical perceptron. Before presenting the storage capacity results that we obtained in \cite{StojnicGardGen13,StojnicGardSphNeg13} we will find it useful to slightly redefine the original feasibility problem considered above. This will of course be of a great use in the exposition that will follow as well.

We first recall that in \cite{StojnicGardSphNeg13} we studied the so-called uncorrelated case of the spherical perceptron (more on an equally important correlated case can be found in e.g. \cite{StojnicGardGen13,Gar88}). This is the same scenario that we will study here (so the simplifications that we made in \cite{StojnicGardSphNeg13} and that we are about to present below will be in place later on as well). In the uncorrelated case, one views all patterns $H_{i,1:n},1\leq i\leq m$, as uncorrelated (as expected, $H_{i,1:n}$ stands for vector $[H_{i1},H_{i2},\dots,H_{in}]$). Now, the following becomes the corresponding version of the question of interest mentioned above: assuming that $H$ is an $m\times n$ matrix with i.i.d. $\{-1,1\}$ Bernoulli entries and that $\|\x\|_2=1$, how large $\alpha=\frac{m}{n}$ can be so that the following system of linear inequalities is satisfied with overwhelming probability
\begin{equation}
H\x\geq \kappa.\label{eq:defprobucor}
\end{equation}
This of course is the same as if one asks how large $\alpha$ can be so that the following optimization problem is feasible with overwhelming probability
\begin{eqnarray}
& & H\x\geq \kappa\nonumber \\
& & \|\x\|_2=1.\label{eq:defprobucor1}
\end{eqnarray}
To see that (\ref{eq:defprobucor}) and (\ref{eq:defprobucor1}) indeed match the above described fixed point condition it is enough to observe that due to statistical symmetry one can assume $H_{i1}=1,1\leq i\leq m$. Also the constraints essentially decouple over the columns of $X$ (so one can then think of $\x$ in (\ref{eq:defprobucor}) and (\ref{eq:defprobucor1}) as one of the columns of $X$). Moreover, the dimension of $H$ in (\ref{eq:defprobucor}) and (\ref{eq:defprobucor1}) should be changed to $m\times (n-1)$; however, since we will consider a large $n$ scenario to make writing easier we keep the dimension as $m\times n$. Also, as mentioned under point 2) above, we will, without a loss of generality, treat $H$ in (\ref{eq:defprobucor1}) as if it has i.i.d. standard normal components. Moreover, in \cite{StojnicGardGen13} we also recognized that (\ref{eq:defprobucor1}) can be rewritten as the following optimization problem
\begin{eqnarray}
\xi_n=\min_{\x} \max_{\lambda\geq 0} & &  \kappa\lambda^T\1- \lambda^T H\x \nonumber \\
\mbox{subject to} & & \|\lambda\|_2= 1\nonumber \\
& & \|\x\|_2=1,\label{eq:uncorminmax}
\end{eqnarray}
where $\1$ is an $m$-dimensional column vector of all $1$'s. Clearly, if $\xi_n\leq 0$ then (\ref{eq:defprobucor1}) is feasible. On the other hand, if $\xi_n>0$ then (\ref{eq:defprobucor1}) is not feasible. That basically means that if we can probabilistically characterize the sign of $\xi_n$ then we could have a way of determining $\alpha$ such that $\xi_n\leq 0$. That is exactly what we have done in \cite{StojnicGardGen13} on an ultimate level for $\kappa\geq 0$ and on a say upper-bounding level for $\kappa<0$. Of course, we do mention again, that as far as point 2) goes, we in \cite{StojnicGardSphNeg13} (and will in this paper as well) without loss of generality again made the same type of assumption that we had made in \cite{StojnicGardGen13} related to the statistics of $H$. In other words, as far as the presentation below is concerned, we will continue to assume that the elements of matrix $H$ are i.i.d. standard normals (as mentioned above, such an assumption changes nothing in the validity of the results that we will present; also, more on this topic can be found in e.g. \cite{StojnicHopBnds10,StojnicLiftStrSec13,StojnicRegRndDlt10} where we discussed it a bit further). Relying on the strategy developed in \cite{StojnicRegRndDlt10,StojnicGorEx10} and on a set of results from \cite{Gordon85,Gordon88} we in \cite{StojnicGardGen13} proved the following theorem that essentially extends Theorem \ref{thm:SchTirTalSto} to the $\kappa<0$ case and thereby resolves Conjecture 8.4.4 from \cite{TalBook} in positive:

\begin{theorem} \cite{StojnicGardGen13}
Let $H$ be an $m\times n$ matrix with i.i.d. standard normal components. Let $n$ be large and let $m=\alpha n$, where $\alpha>0$ is a constant independent of $n$. Let $\xi_n$ be as in (\ref{eq:uncorminmax}) and let $\kappa$ be a scalar constant independent of $n$. Let all $\epsilon$'s be arbitrarily small constants independent of $n$. Further, let $\g_i$ be a standard normal random variable and set
\begin{equation}
f_{gar}(\kappa)=\frac{1}{\sqrt{2\pi}}\int_{-\kappa}^{\infty}(\g_i+\kappa)^2e^{-\frac{\g_i^2}{2}}d\g_i.\label{eq:fgarlemmaunncorlb}
\end{equation}
Let $\xi_n^{(l)}$ and $\xi_n^{(u)}$ be scalars such that
\begin{eqnarray}
(1-\epsilon_{1}^{(m)})\sqrt{\alpha f_{gar}(\kappa)}-(1+\epsilon_{1}^{(n)})-\epsilon_{5}^{(g)} & > & \frac{\xi_n^{(l)}}{\sqrt{n}}\nonumber \\
(1+\epsilon_{1}^{(m)})\sqrt{\alpha f_{gar}(\kappa)}-(1-\epsilon_{1}^{(n)})+\epsilon_{5}^{(g)} & < & \frac{\xi_n^{(u)}}{\sqrt{n}}.\label{eq:condxinthmstoc30}
\end{eqnarray}
If $\kappa\geq 0$ then
\begin{equation}
 \lim_{n\rightarrow\infty}P(\xi_n^{(l)}\leq \xi_n\leq \xi_n^{(u)})=\lim_{n\rightarrow\infty}P(\min_{\|\x\|_2=1}\max_{\|\lambda\|_2=1,\lambda_i\geq 0}(\xi_n^{(l)}\leq \kappa\lambda^T\1-\lambda^TH\x)\leq \xi_n^{(u)})\geq 1. \label{eq:probthmstoc30poskappa}
\end{equation}
Moreover, if $\kappa< 0$ then
\begin{equation}
 \lim_{n\rightarrow\infty}P(\xi_n\geq \xi_n^{(l)})=\lim_{n\rightarrow\infty}P(\min_{\|\x\|_2=1}\max_{\|\lambda\|_2=1,\lambda_i\geq 0}( \kappa\lambda^T\1-\lambda^TH\x)\geq \xi_n^{(u)})\geq 1. \label{eq:probthmstoc30negkappa}
\end{equation}
\label{thm:Stoc30}
\end{theorem}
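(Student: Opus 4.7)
The plan is to invoke Gordon's minimax comparison theorem for Gaussian processes (as developed in \cite{Gordon85,Gordon88} and deployed for such problems in \cite{StojnicRegRndDlt10,StojnicGorEx10}) to replace the bilinear form $\lambda^T H \x$ with a separable Gaussian proxy $\g^T \lambda + \h^T \x$, where $\g$ and $\h$ are independent standard normal vectors of dimensions $m$ and $n$ respectively. The reason this comparison is in order is that on the product of unit spheres
\begin{equation*}
E[(\lambda^T H \x)(\lambda'^T H \x')] = (\lambda^T \lambda')(\x^T \x'),
\end{equation*}
which matches exactly the covariance inequalities Gordon's theorem requires when compared against the auxiliary process on the same index set.

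First I would establish, via Gordon's inequality in its probabilistic form, the one-sided bound
\begin{equation*}
P(\xi_n \geq t) \geq P\left(\min_{\|\x\|_2=1}\,\max_{\|\lambda\|_2=1,\ \lambda\geq 0}\left(\kappa \lambda^T \1 - \g^T \lambda - \h^T \x\right) \geq t\right)
\end{equation*}
for every real $t$. The auxiliary minimax separates cleanly: the $\x$-minimization yields $-\|\h\|_2$, and the $\lambda$-maximization over the intersection of the unit sphere with the positive orthant yields $\|(\kappa\1 - \g)_+\|_2$, where $(\cdot)_+$ denotes the componentwise positive part. A direct computation shows $E[((\kappa - g_i)_+)^2] = f_{gar}(\kappa)$, so standard concentration (the $\chi$-tail for $\|\h\|_2$ and sub-exponential concentration for $\|(\kappa\1-\g)_+\|_2^2$) gives, with overwhelming probability,
\begin{equation*}
\|\h\|_2 \leq (1+\epsilon_1^{(n)})\sqrt{n}, \qquad \|(\kappa\1 - \g)_+\|_2 \geq (1-\epsilon_1^{(m)})\sqrt{\alpha n\, f_{gar}(\kappa)}.
\end{equation*}
Assembling these two concentration bounds with the Gordon inequality delivers exactly the lower estimate $\xi_n \geq \xi_n^{(l)}$ demanded by (\ref{eq:probthmstoc30poskappa}) and (\ref{eq:probthmstoc30negkappa}). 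Since the lower side of Gordon's inequality imposes no sign condition on the additional linear term $\kappa \lambda^T \1$, this half of the argument is uniform in $\kappa \in \mathbb{R}$, which is why the statement (\ref{eq:probthmstoc30negkappa}) survives for negative $\kappa$.

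For the matching upper bound $\xi_n \leq \xi_n^{(u)}$ in the case $\kappa \geq 0$, I would invoke the reverse direction of Gordon's comparison, which requires verifying the covariance conditions in the opposite sense on the restricted index set. When $\kappa \geq 0$ the linear piece $\kappa \lambda^T \1$ is a sign-definite perturbation acting only on the $\lambda$ side and is absorbed into the comparison without disturbing the sign structure; the resulting upper bound is then controlled by the same two Gaussian norm concentrations (now used in the reverse direction), producing the required $\xi_n^{(u)}$ in (\ref{eq:condxinthmstoc30}). When $\kappa<0$ the reverse direction of the comparison would demand an inequality inconsistent with Gordon's covariance hypotheses once the positivity constraint $\lambda \geq 0$ is in place, so no such matching upper bound can be extracted this way; this is precisely the source of the asymmetry in the conclusion.

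The main technical obstacle is making Gordon's theorem operate cleanly against the constraint $\lambda \geq 0$: one must verify that restricting the $\lambda$-maximization from the full unit sphere to its intersection with the positive orthant still yields the correct comparison inequality and still produces the clean closed-form proxy $\|(\kappa\1-\g)_+\|_2$. A secondary, more routine obstacle is upgrading Gordon's expectation-level comparison to an overwhelming-probability statement that absorbs the slacks $\epsilon_1^{(m)}$, $\epsilon_1^{(n)}$, $\epsilon_5^{(g)}$; this is done by Gaussian concentration of the Lipschitz functionals $\|\h\|_2$ and $\|(\kappa\1-\g)_+\|_2$, combined with a union bound relating Gordon's high-probability conclusion to the resulting concentration tails.
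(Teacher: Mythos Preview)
Your lower-bound plan is exactly the route the paper (via \cite{StojnicGardGen13}) takes; the parallel argument is written out in full in Section~\ref{sec:probanalrig} for the limited-errors model. One technical point you have glossed over: the comparison $P(\xi_n\geq t)\geq P(\min_\x\max_\lambda(\kappa\lambda^T\1-\g^T\lambda-\h^T\x)\geq t)$ as stated does not satisfy Gordon's first hypothesis, since $E[(\lambda^TH\x)^2]=1$ while $E[(\g^T\lambda+\h^T\x)^2]=2$ on the product of unit spheres. The paper's fix (Lemma~\ref{lemma:negproblemma}) is to add an independent standard normal $g$ on the $H$ side so that both processes have variance $2$; this auxiliary $g$ is then stripped out at the cost of the slack $\epsilon_5^{(g)}\sqrt{n}$ (see (\ref{eq:leftnegprobanal1})). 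So $\epsilon_5^{(g)}$ is not a concentration slack as you describe it---it is the price of the variance-matching device.

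Your upper-bound sketch has a real gap. There is no ``reverse direction of Gordon's comparison'' obtained by ``verifying the covariance conditions in the opposite sense'': the hypotheses in Theorems~\ref{thm:Gordonmesh1}--\ref{thm:Gordonmesh2} involve only the \emph{centered} Gaussian parts of the processes and are completely insensitive to the deterministic shift $\kappa\lambda^T\1$, so the sign of $\kappa$ cannot possibly enter through the covariance check. (In the paper's own verification (\ref{eq:negexplemmaproof2})--(\ref{eq:negexplemmaproof4}) the parameter $\kappa$ never appears.) What actually distinguishes $\kappa\geq 0$ in \cite{StojnicGardGen13} is a structural property of the \emph{original} optimization---roughly, that when $\kappa\geq 0$ the sphere constraint on $\lambda$ can be relaxed to the ball $\|\lambda\|_2\leq 1$ without loss, and the resulting convex--concave structure permits a $\min$--$\max$/$\max$--$\min$ interchange, after which a \emph{second} application of Gordon with the roles of $\x$ and $\lambda$ swapped closes the gap. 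For $\kappa<0$ this relaxation and interchange break down, and only the one-sided conclusion (\ref{eq:probthmstoc30negkappa}) survives. You have correctly located \emph{where} the asymmetry enters but misidentified \emph{why}.
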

\begin{proof}
Presented in \cite{StojnicGardGen13}.
\end{proof}
In a more informal language (essentially ignoring all technicalities and $\epsilon$'s) one has that as long as
\begin{equation}
\alpha>\frac{1}{f_{gar}(\kappa)},\label{eq:condalphauncorlb}
\end{equation}
the problem in (\ref{eq:defprobucor1}) will be infeasible with overwhelming probability. On the other hand, one has that when $\kappa\geq 0$ as long as
\begin{equation}
\alpha<\frac{1}{f_{gar}(\kappa)},\label{eq:condalphauncorubpos}
\end{equation}
the problem in (\ref{eq:defprobucor1}) will be feasible with overwhelming probability. This of course settles the case $\kappa \geq 0$ completely and essentially establishes the storage capacity as $\alpha_c$ which of course matches the prediction given in the introductory analysis presented in \cite{Gar88} and of course rigorously confirmed by the results of \cite{SchTir02,SchTir03,TalBook}. On the other hand, when $\kappa < 0$ it only shows that the storage capacity with overwhelming probability is not higher than the quantity given in \cite{Gar88}. As mentioned above this confirms Talagrand's conjecture 8.4.4 from \cite{TalBook}. However, it does not settle problem (question) 8.4.2 from \cite{TalBook}.

The results obtained based on the above theorem as well as those obtained based on Theorem \ref{thm:SchTirTalSto} are presented in Figure \ref{fig:alfackappa}. When $\kappa\geq 0$ (i.e. when $\alpha\leq 2$) the curve indicates the exact breaking point between the ``overwhelming" feasibility and infeasibility of (\ref{eq:defprobucor1}). On the other hand, when $\kappa< 0$ (i.e. when $\alpha> 2$) the curve is only an upper bound on the storage capacity, i.e. for any value of the pair $(\alpha,\kappa)$ that is above the curve given in Figure \ref{fig:alfackappa}, (\ref{eq:defprobucor1}) is infeasible with overwhelming probability.
\begin{figure}[htb]
%%%%%\begin{minipage}[b]{1.0\linewidth}
\centering
\centerline{\epsfig{figure=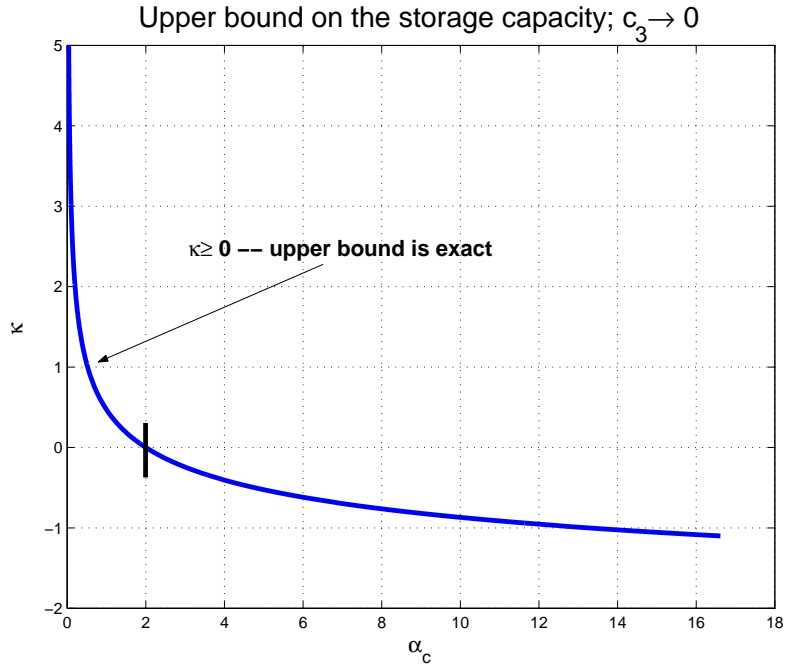,width=10.5cm,height=9cm}}
%%%%%%\end{minipage}
\caption{$\alpha_c$ as a function of $\kappa$}
\label{fig:alfackappa}
\end{figure}

Since the case $\kappa<0$ did not appear as settled based on the above presented results we then in \cite{StojnicGardSphNeg13} attempted to lower the upper bounds given in Theorem \ref{thm:Stoc30}. We created a fairly powerful mechanism that produced the following theorem as a way of characterizing the storage capacity of the negative spherical perceptron.

\begin{theorem}
Let $H$ be an $m\times n$ matrix with i.i.d. standard normal components. Let $n$ be large and let $m=\alpha n$, where $\alpha>0$ is a constant independent of $n$. Let $\kappa<0$ be a scalar constant independent of $n$. Let all $\epsilon$'s be arbitrarily small constants independent of $n$. Set
\begin{equation}
\widehat{\gamma^{(s)}}=\frac{2c_3^{(s)}+\sqrt{4(c_3^{(s)})^2+16}}{8},\label{eq:gamaliftthm}
\end{equation}
and
\begin{equation}
I_{sph}(c_3^{(s)}) = \widehat{\gamma^{(s)}}-\frac{1}{2c_3^{(s)}}\log(1-\frac{c_3^{(s)}}{2\widehat{\gamma^{(s)}}}).\label{eq:Isphthm}
\end{equation}
Set
\begin{equation}
p  =  1+\frac{c_3^{(s)}}{2\gamma_{per}^{(s)}},
q  =  \frac{c_3^{(s)}\kappa}{2\gamma_{per}^{(s)}},
r  =  \frac{c_3^{(s)}\kappa^2}{4\gamma_{per}^{(s)}},
s  =  -\kappa\sqrt{p}+\frac{q}{\sqrt{p}},
C  =  \frac{exp(\frac{q^2}{2p}-r)}{\sqrt{p}},\label{eq:helpdef}
\end{equation}
and
\begin{equation}
I_{per}^{(1)}(c_3^{(s)},\gamma_{per}^{(s)},\kappa)=\frac{1}{2}erfc(\frac{\kappa}{\sqrt{2}})+\frac{C}{2}(erfc(\frac{s}{\sqrt{2}})).\label{eq:Iper1}
\end{equation}
Further, set
\begin{equation}
I_{per}(c_3^{(s)},\alpha,\kappa)=\max_{\gamma_{per}^{(s)}\geq 0}(\gamma_{per}^{(s)}+\frac{1}{c_3^{(s)}}\log(I_{per}^{(1)}(c_3^{(s)},\gamma_{per}^{(s)},\kappa))).\label{eq:Iperthm}
\end{equation}
If $\alpha$ is such that
\begin{equation}
\min_{c_3^{(s)}\geq 0}(-\frac{c_3^{(s)}}{2}+I_{sph}(c_3^{(s)})+I_{per}(c_3^{(s)},\alpha,\kappa))<0,\label{eq:condliftsphnegthm}
\end{equation}
then (\ref{eq:defprobucor1}) is infeasible with overwhelming probability.
\label{thm:liftnegsphper}
\end{theorem}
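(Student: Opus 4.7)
The plan is to bound the probability of feasibility of (\ref{eq:defprobucor1}) through the min--max reformulation $\xi_n$ from (\ref{eq:uncorminmax}). Since feasibility is equivalent to $\xi_n \leq 0$, it suffices to show $P(\xi_n \leq 0)\to 0$ exponentially in $n$ whenever (\ref{eq:condliftsphnegthm}) holds. The opening move is the standard Chernoff bound
$$P(\xi_n \leq 0) \;\leq\; E\bigl[\exp\bigl(-c_3^{(s)}\sqrt{n}\,\xi_n\bigr)\bigr],$$
valid for any $c_3^{(s)}>0$. The scaling $c_3^{(s)}\sqrt{n}$ is dictated by Theorem \ref{thm:Stoc30}, which tells us $\xi_n$ lives on the scale $\sqrt{n}$; with this scaling the left and right sides both live on the large-deviation scale $\exp(-\Theta(n))$ and taking $(\tfrac{1}{nc_3^{(s)}})\log(\cdot)$ will produce an $O(1)$ rate function.

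The crucial second step is to pass from the bilinear Gaussian form $\lambda^T H\x$ in $\xi_n$ to a decoupled Gaussian process, using the exponential (``lifted'') form of Gordon's comparison theorem developed in \cite{StojnicLiftStrSec13,StojnicMoreSophHopBnds10}. In contrast with the ordinary Gordon inequality used for Theorem \ref{thm:Stoc30} (which compares \emph{expectations}), the lifted version compares \emph{moment generating functions} of min--max Gaussian processes under the same covariance hypotheses. Applied to (\ref{eq:uncorminmax}), it yields, with $\g\in\mathbb{R}^m$ and $\h\in\mathbb{R}^n$ independent standard Gaussian vectors,
$$\tfrac{1}{nc_3^{(s)}}\log E\bigl[\exp(-c_3^{(s)}\sqrt{n}\,\xi_n)\bigr] \;\leq\; -\tfrac{c_3^{(s)}}{2} \;+\; \tfrac{1}{nc_3^{(s)}}\log E\Bigl[\exp\bigl(c_3^{(s)}\sqrt{n}\,(\|\h\|_2 - \max_{\|\lambda\|_2=1,\,\lambda\geq 0}\lambda^T(\kappa\1+\g))\bigr)\Bigr].$$
The $-c_3^{(s)}/2$ correction is the ``lifting residue,'' inherited from the scalar Gaussian that the lifted comparison introduces to equalize the variances of the original bilinear process $\lambda^T H\x$ and the simpler decoupled process $\|\x\|_2\g^T\lambda+\|\lambda\|_2\h^T\x$.

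The third step is an asymptotic evaluation of the decoupled right-hand side. Because $\g$ and $\h$ are independent and the constraints $\|\x\|_2=1,\|\lambda\|_2=1,\lambda\geq 0$ separate, the inner expectation splits as a product of a spherical factor $E[\exp(c_3^{(s)}\sqrt{n}\,\|\h\|_2)]$ and a perceptron factor $E[\exp(-c_3^{(s)}\sqrt{n}\,\|(\kappa\1+\g)_+\|_2)]$. For each factor apply the elementary identity $\sqrt{a}=\min_{\gamma>0}(\gamma+\tfrac{a}{4\gamma})$ to linearize the square root. On the spherical side this converts the MGF of $\|\h\|_2$ into the MGF of a $\chi^2_n$ at an auxiliary parameter $\gamma^{(s)}$; the resulting one-dimensional optimization is explicit and reproduces (\ref{eq:gamaliftthm})--(\ref{eq:Isphthm}). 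On the perceptron side the same trick linearizes $\|(\kappa\1+\g)_+\|_2^2=\sum_{i=1}^m(\kappa+g_i)_+^2$, the coordinates of $\g$ separate, and a one-dimensional Gaussian integral produces the auxiliary quantities $p,q,r,s,C$ in (\ref{eq:helpdef}) and the single-letter integrand $I_{per}^{(1)}$ in (\ref{eq:Iper1}); the supremum over the auxiliary $\gamma_{per}^{(s)}$ produces $I_{per}$ in (\ref{eq:Iperthm}).

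Collecting the three steps gives
$$\tfrac{1}{nc_3^{(s)}}\log P(\xi_n\leq 0) \;\leq\; -\tfrac{c_3^{(s)}}{2}+I_{sph}(c_3^{(s)})+I_{per}(c_3^{(s)},\alpha,\kappa)+o(1),$$
so whenever the infimum of the right-hand side over $c_3^{(s)}\geq 0$ is strictly negative---which is exactly (\ref{eq:condliftsphnegthm})---the probability of feasibility decays exponentially in $n$ and (\ref{eq:defprobucor1}) is infeasible with overwhelming probability. The main technical obstacle is the second step: one must install the lifted Gaussian comparison in a form that (i) handles the asymmetric feasible sets ($\x$ on the full sphere, $\lambda$ on the positive part of the sphere) and the affine offset $\kappa\1$, (ii) produces a comparison process that genuinely decouples in $\x$ and $\lambda$, and (iii) yields the clean $-c_3^{(s)}/2$ lifting correction. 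Once that is in hand, the saddle-point evaluation of Step 3 is a routine (if tedious) Laplace-method computation, and Step 1 is immediate.
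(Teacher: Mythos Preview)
Your proposal is correct and follows essentially the same route as the paper. The paper defers the proof of this theorem to \cite{StojnicGardSphNeg13}, but the machinery it displays for the closely related Theorem~\ref{thm:liftnegsphperlow} is exactly what you describe: the exponential (``lifted'') Gordon comparison of Theorem~\ref{thm:Gordonneg1} applied with $\psi_{ij}(x)=e^{-c_3 x}$ (your Step~2, their Lemma~\ref{lemma:negexplemmalow}), followed by the square-root linearization $\sqrt{a}=\min_{\gamma>0}(\gamma+a/(4\gamma))$ and a Laplace/concentration argument to separate the spherical and perceptron factors and reduce to single-letter integrals (your Step~3, their (\ref{eq:chneg9})--(\ref{eq:gamaiden2sec})).

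The only cosmetic difference is packaging: the paper first applies Jensen to obtain a lower bound on $E\xi_n$ from the moment generating function (their (\ref{eq:chneg8})), and then invokes concentration of $\xi_n$ around its mean to conclude $\xi_n>0$ with overwhelming probability; you instead apply Markov/Chernoff directly to $P(\xi_n\leq 0)\leq E[e^{-c_3^{(s)}\sqrt{n}\,\xi_n}]$ and bound the same MGF. Both routes reduce to bounding $E[e^{-c_3^{(s)}\sqrt{n}\,\xi_n}]$ from above and lead to the identical condition (\ref{eq:condliftsphnegthm}); your formulation is arguably slightly cleaner since it bypasses the separate concentration step. Your identification of the $-c_3^{(s)}/2$ term as coming from the auxiliary scalar Gaussian $g$ in the comparison lemma, and of the lifted comparison itself as the main technical hurdle, matches the paper exactly.
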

\begin{proof}
Presented in \cite{StojnicGardSphNeg13}.
\end{proof}

The results one can obtain for the storage capacity based on the above theorem are presented in Figure \ref{fig:liftsphneg} (as mentioned in \cite{StojnicGardSphNeg13}, due to numerical optimizations involved the results presented in Figure \ref{fig:liftsphneg} should be taken only as an illustration; also as discussed in \cite{StojnicGardSphNeg13} taking $c_3^{(s)}\rightarrow 0$ in Theorem \ref{thm:liftnegsphper} produces the results of Theorem \ref{thm:Stoc30}). Even as such, they indicate that a visible improvement in the values of the storage capacity may be possible, though in a range of values of $\alpha$ substantially larger than $2$ (i.e. in a range of $\kappa$'s somewhat smaller than zero). While at this point this observation may look as unrelated to the problem that we will consider in the following section one should keep it in mind (essentially, a conceptually similar conclusion will be made later on when we study the capacities with limited errors).

\begin{figure}[htb]
%%%%%\begin{minipage}[b]{1.0\linewidth}
\centering
\centerline{\epsfig{figure=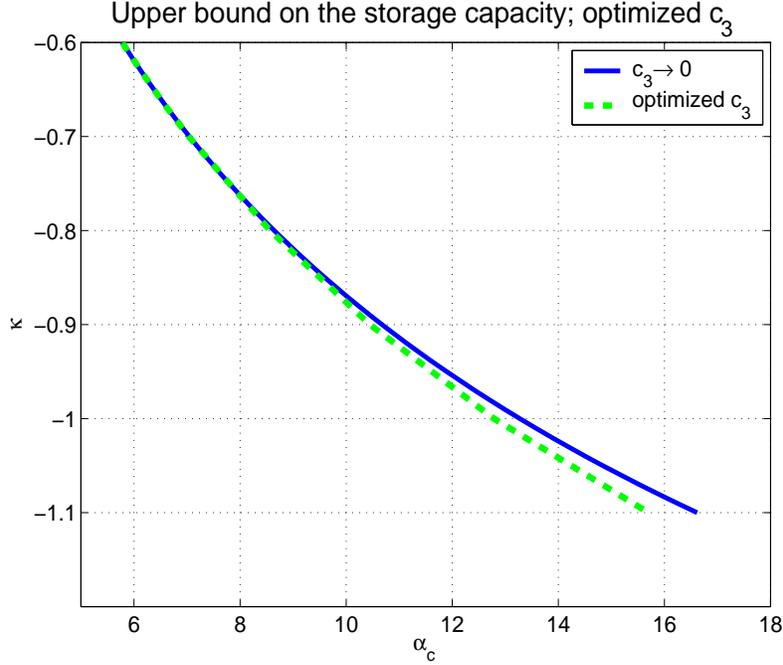,width=10.5cm,height=9cm}}
%%%%%%\end{minipage}
\caption{$\kappa$ as a function of $\alpha_{c}^{(u,low)}$}
\label{fig:liftsphneg}
\end{figure}

%%%%%%%%%%%%%%%%%%%%%%%%%%%%%%%%%%%%%%%%%%%%%%%%%%%%%%%%%%%%%%%%%
\section{Spherical perceptron with errors}
\label{sec:sphpererrors}
%%%%%%%%%%%%%%%%%%%%%%%%%%%%%%%%%%%%%%%%%%%%%%%%%%%%%%%%%%%%%%%%%

What we described in the previous section is a typical setup of a standard spherical perceptron. To be a bit more precise, it is a setup one can use to in a way quantify the storage capacity of the standard spherical perceptron. In this section we will slightly change this standard notion of how the spherical perceptron operates. In fact, what we will change will actually be what is an acceptable way of spherical perceptron's operation. Of course, such a chnage is not our invention. While it had been known for a long time, it is the work of Gardner \cite{Gar88} that popularized its an analytical study. Before, we present the known analytical predictions we will briefly sketch the main idea behind the spherical perceptrons that will be allowed to function as memories with errors. We will rely on many simplifications of the original perceptron setup from Section \ref{sec:mathsetupper} introduced in \cite{StojnicGardGen13,StojnicGardSphNeg13} and presented in Section \ref{sec:knownres}.

To that end we start by recalling that for all practical purposes needed here (and those we needed in \cite{StojnicGardGen13,StojnicGardSphNeg13}) the storage capacity of the standard spherical perceptron can be considered through the feasibility problem given in (\ref{eq:defprobucor1}) which we restate below
\begin{eqnarray}
& & H\x\geq \kappa\nonumber \\
& & \|\x\|_2=1.\label{eq:defprobucor1err}
\end{eqnarray}
We of course recall as well, that as argued in \cite{StojnicGardGen13,StojnicGardSphNeg13} (and as mentioned in the previous section) one can assume that the elements of $H$ are i.i.d. standard normals and that the dimension of $H$ is $m\times n$, where as earlier we keep the linear regime, i.e. continue to assume that $m=\alpha n$ where $\alpha$ is a constant independent of $n$. Now, if all inequalities in (\ref{eq:defprobucor1err}) are satisfied one can have that the dynamics established will be stable and all $m$ patterns could be successfully stored. On the other hand if one relaxes such a constraint so that only a fraction of them (say larger than $(1-f_{wb})$) is satisfied then only such a fraction of patterns could be successfully stored (of course one views storage at each site $i$; however, due to symmetry as discussed earlier, one can simply just switch to consideration of (\ref{eq:defprobucor1err})). This is of course similar to saying if a fraction (say smaller than $f_{wb}$) of the inequalities may not hold then such a fraction of patterns could be incorrectly stored. One can then reformulate (\ref{eq:defprobucor1err}) so that it provides a mathematical description for such a scenario. The resulting feasibility problem one can then consider becomes
\begin{eqnarray}
& & \d_i(H_{i,:}\x-\kappa)\geq \0,1\leq i\leq m\nonumber \\
& & \sum_{i=1}^n \d_i=(1-f_{wb})m\nonumber\\
& & \d_i\in\{0,1\},1\leq i\leq m\nonumber \\
& & \|\x\|_2=1.\label{eq:feas1err}
\end{eqnarray}
Using the replica approach Gardner developed for a problem similar to this one in \cite{Gar88}, Gardner and Derrida in \cite{GarDer88} proceeded and characterized the feasibility of (\ref{eq:feas1err}). Namely, they gave a prediction for the value of the critical storage capacity $\alpha_{c,wb}$ as a function of $f_{wb}$ and $\kappa$ so that (\ref{eq:feas1err}) is feasible (as mentioned earlier, in what follows we may often refer to $\alpha_{c,wb}$ as the storage capacity of the spherical perceptron with limited errors). The prediction given in \cite{GarDer88} essentially boils down to the following two equations: first one determines $x$ as the solution of
\begin{equation}
f_{wb}=\frac{1}{\sqrt{2\pi}}\int_{-\infty}^{\kappa-x}e^{-\frac{z^2}{2}}dz.\label{eq:xfromfmingar}
\end{equation}
Then one determines a prediction for the storage capacity $\alpha_{c,wb}^{(gar)}(\kappa)$ as
\begin{equation}
\alpha_{c,wb}^{(gar)}(\kappa)=\alpha_{c,wb}^{(gar)}(\kappa,x)=\left ( \frac{1}{\sqrt{2\pi}}\int_{\kappa-x}^{\kappa}(z-\kappa)^2e^{-\frac{z^2}{2}}dz\right )^{-1}.\label{eq:alphafromxgar}
\end{equation}
Now, assuming the standard setup (where no errors are allowed) one has $f_{wb}\rightarrow 0$ which from (\ref{eq:xfromfmingar}) implies $x\rightarrow \infty$. One then from (\ref{eq:alphafromxgar}) has
\begin{equation}
\alpha_{c,wb}^{(gar)}(\kappa,\infty)\rightarrow\left ( \frac{1}{\sqrt{2\pi}}\int_{-\infty}^{\kappa}(z-\kappa)^2e^{-\frac{z^2}{2}}dz\right )^{-1}=\left ( \frac{1}{\sqrt{2\pi}}\int_{-\kappa}^{\infty}(z+\kappa)^2e^{-\frac{z^2}{2}}dz\right )^{-1}=f_{gar}(\kappa)=\alpha_c(\kappa).\label{eq:errnoerr}
\end{equation}
In other words, if no errors are allowed (\ref{eq:xfromfmingar}) and (\ref{eq:alphafromxgar}) give the same result for the storage capacity as does (\ref{eq:garstorcap}). Now, looking back at what was presented in Figure \ref{fig:alfackappa}, one should note that when $\kappa\geq 0$ (the case primarily of interest here) the curve denotes the exact values of the storage capacity for any $\kappa$. On the other hand, one from the same plot has that if a pair $(\alpha,\kappa)$ is above the curve the memory is not stable, i.e. it is with overwhelming probability that one can not find a spherical $\x$ such that (\ref{eq:defprobucor1}) is feasible. However, if one attempts to be a bit more precise with respect to this instability one may find it useful to introduce a number of allowed wrong patterns (bits). This is in essence what (\ref{eq:xfromfmingar}) and (\ref{eq:alphafromxgar}) do. They basically attempt to characterize the number of incorrectly stored patterns when $\kappa\geq 0$ and a pair $(\alpha,\kappa)$ is above the curve given in Figure \ref{fig:alfackappa} (in fact one can use them to give a prediction for the number of incorrectly stored patterns (say $f_{wb}m$) even when $\kappa<0$). Alternatively, as framed above, one can think of all of this as a way of finding the storage capacity if a fraction of errors (incorrectly stored patterns), say $f_{wb}$ is allowed. This is of course exactly the problem that we will be attacking below and based on the above is exactly what (\ref{eq:xfromfmingar}) and (\ref{eq:alphafromxgar}) characterize.

Before proceeding further we should provide a few comments as for the potential accuracy of the above predictions. As is now well known if $\kappa\geq 0$ and $f_{wb}\rightarrow 0$ then the above prediction boils down to the standard storage capacity of the positive spherical perceptron which is based on \cite{SchTir02,SchTir03} (and later on \cite{TalBook,StojnicGardGen13}) known to be correct. On the other hand, as discussed in \cite{StojnicGardSphNeg13} (and briefly in the previous section), the above prediction is only a rigorous upper bound on the storage capacity of the negative spherical perceptron.  In fact, many of the conclusions already made in \cite{Gar88,GarDer88} indicated this kind of behavior. Namely, a stability analysis of the replica approach done in \cite{GarDer88} indicated that some of the predictions (essentially in a certain range of $(\alpha,\kappa)$ plane) related to the storage capacities when the errors are allowed may not be accurate. In \cite{Bouten94} the replica stability range given in \cite{GarDer88} was corrected a bit and as a consequence \cite{Bouten94} actually established that the replica analysis of \cite{GarDer88} may in fact produce incorrect results in the entire regime above the curve given in Figure \ref{fig:alfackappa}. Still, even if the results given in (\ref{eq:xfromfmingar}) and (\ref{eq:alphafromxgar}) are to be incorrect, they may be a fairly good approximate predictions for the storage capacity (or alternatively the fraction of incorrectly stored patterns) or they may even be say rigorous bounds on the true values (as were the predictions of \cite{Gar88} related to the negative spherical perceptron). Below we will show that the above given predictions (namely, those given in (\ref{eq:xfromfmingar}) and (\ref{eq:alphafromxgar})) are in fact rigorous upper bounds on the storage capacity of the spherical perceptron when a fraction of incorrectly stored patterns is allowed.

%%%%%%%%%%%%%%%%%%%%%%%%%%%%%%%%%%%%%%%%%%%%%%%%%%%%%%%%%%%%%%%%%
\section{Upper bounds on the storage capacity of the spherical perceptrons with limited errors}
\label{sec:sphpererrorsrig}
%%%%%%%%%%%%%%%%%%%%%%%%%%%%%%%%%%%%%%%%%%%%%%%%%%%%%%%%%%%%%%%%%

As we have mentioned at the end of the previous section, in this section we will create a set of results that will essentially establish the predictions obtained in \cite{GarDer88} (and given in (\ref{eq:xfromfmingar}) and (\ref{eq:alphafromxgar})) as rigorous upper bounds on the storage capacity of the spherical perceptron with limited errors. We start by writing an analogue to for the feasibility problem of interest here, namely the one given in (\ref{eq:feas1err})
\begin{eqnarray}
\xi_{wb}=\min_{\x,\d} \max_{\lambda\geq 0} & &  \kappa\lambda^T\diag(\d)\1- \lambda^T\diag(\d) H\x \nonumber \\
\mbox{subject to} & & \|\lambda\|_2= 1\nonumber \\
& & \sum_{i=1}^n \d_i=(1-f_{wb})m\nonumber\\
& & \d_i\in\{0,1\},1\leq i\leq m\nonumber \\
& & \|\x\|_2=1.\label{eq:uncorminmaxerr}
\end{eqnarray}
Although it is probably obvious, we mention that $\diag(\d)$ is an $m\times m$ matrix with elements of vector $\d$ on its main diagonal and zeros elsewhere. Clearly, following the logic we presented in previous sections, the sign of $\xi_{wb}$ determines the feasibility of (\ref{eq:feas1err}). In particular, if $\xi_{wb}>0$ then (\ref{eq:feas1err}) is infeasible. Given the random structure of the problem (we recall that $H$ is random) one can then pose the following probabilistic feasibility question: how small can $m$ be so that $\xi_{wb}$ in (\ref{eq:uncorminmaxerr}) is positive and (\ref{eq:feas1err}) is infeasible with overwhelming probability? In what follows we will attempt to provide an answer to such a question.

%%%%%%%%%%%%%%%%%%%%%%%%%%%%%%%%%%%%%%%%%%%%%%%%%%%%%%%%%%%%%%%%%
\subsection{Probabilistic analysis}
\label{sec:probanalrig}
%%%%%%%%%%%%%%%%%%%%%%%%%%%%%%%%%%%%%%%%%%%%%%%%%%%%%%%%%%%%%%%%%

In this section we will present a probabilistic analysis of the above optimization problem given in (\ref{eq:uncorminmaxerr}). In a nutshell, we will provide a relation between $f_{wb}$ and $\alpha=\frac{m}{n}$ so that with overwhelming probability over $H$ $\xi_{wb}>0$. This will, of course, based on the above discussion then be enough to conclude that the problem in (\ref{eq:feas1err}) is infeasible with overwhelming probability when $f_{wb}$ and $\alpha=\frac{m}{n}$ satisfy such a relation.

The analysis that we will present below will to a degree rely on a strategy we developed in \cite{StojnicRegRndDlt10,StojnicGorEx10} and utilized in \cite{StojnicGardGen13} when studying the storage capacity of the standard spherical perceptrons. We start by recalling on a set of probabilistic results from \cite{Gordon85,Gordon88} that were used as an integral part of the strategy developed in \cite{StojnicRegRndDlt10,StojnicGorEx10,StojnicGardGen13}.
\begin{theorem}(\cite{Gordon88,Gordon85})
\label{thm:Gordonmesh1} Let $X_{ij}$ and $Y_{ij}$, $1\leq i\leq n,1\leq j\leq m$, be two centered Gaussian processes which satisfy the following inequalities for all choices of indices
\begin{enumerate}
\item $E(X_{ij}^2)=E(Y_{ij}^2)$
\item $E(X_{ij}X_{ik})\geq E(Y_{ij}Y_{ik})$
\item $E(X_{ij}X_{lk})\leq E(Y_{ij}Y_{lk}), i\neq l$.
\end{enumerate}
Then
\begin{equation*}
P(\bigcap_{i}\bigcup_{j}(X_{ij}\geq \lambda_{ij}))\leq P(\bigcap_{i}\bigcup_{j}(Y_{ij}\geq \lambda_{ij})).
\end{equation*}
\end{theorem}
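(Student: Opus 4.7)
The plan is a Slepian-type Gaussian comparison carried out by interpolation, with a smooth surrogate for the event indicator whose mixed second partials have the sign pattern dual to the covariance hypotheses. I would enlarge the probability space so that $X$ and $Y$ are independent, and set
\begin{equation*}
Z_{ij}(t) = \sqrt{t}\, X_{ij} + \sqrt{1-t}\, Y_{ij}, \qquad t\in[0,1],
\end{equation*}
so that $Z(0)=Y$, $Z(1)=X$, and condition~(1) guarantees that $E[Z_{ij}(t)^2]$ is independent of $t$. It then suffices to show that $\Phi(t) := E[F(Z(t))]$ is nonincreasing in $t$ for an appropriate smooth surrogate $F$ of $\mathbf{1}\bigl\{\bigcap_i\bigcup_j \{z_{ij}\geq \lambda_{ij}\}\bigr\}$, and to pass to the limit.

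For the surrogate I would use the product form
\begin{equation*}
F_\beta(z) = \prod_i \Bigl[1-\prod_j \bigl(1-g_\beta(z_{ij}-\lambda_{ij})\bigr)\Bigr],
\end{equation*}
with $g_\beta\in C^2$ nondecreasing, taking values in $[0,1]$, and approaching $\mathbf{1}_{[0,\infty)}$ as $\beta\to\infty$. Direct differentiation yields a clean sign pattern for the mixed partials: $\partial^2 F_\beta/\partial z_{ij}\,\partial z_{il}\leq 0$ when $j\neq l$ (same row, through the $\prod_{j'\neq j}(1-g_\beta)$ factor), while $\partial^2 F_\beta/\partial z_{ij}\,\partial z_{kl}\geq 0$ when $i\neq k$ (different rows, as a product of two nonnegative first partials times the remaining row-factors in $[0,1]$). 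Both signs follow from nothing more than the nonnegativity of $g_\beta$, $g_\beta'$, and $1-g_\beta$.

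Differentiating $\Phi$ and applying Gaussian integration by parts (Stein's identity) to the $X_{ij}$ and $Y_{ij}$ factors arising from $dZ/dt$, using the independence of $X$ and $Y$, one obtains
\begin{equation*}
\Phi'(t) = \tfrac12\sum_{(i,j),(k,l)} \bigl(E[X_{ij}X_{kl}]-E[Y_{ij}Y_{kl}]\bigr)\, E\!\left[\frac{\partial^2 F_\beta}{\partial z_{ij}\,\partial z_{kl}}(Z(t))\right].
\end{equation*}
The diagonal $(i,j)=(k,l)$ terms vanish by~(1); for $i=k$, $j\neq l$ the covariance difference is $\geq 0$ by~(2) while the second partial is $\leq 0$; and for $i\neq k$ the covariance difference is $\leq 0$ by~(3) while the second partial is $\geq 0$. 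Every term is thus nonpositive, so $\Phi'(t)\leq 0$, hence $\Phi(1)\leq \Phi(0)$, i.e.,\ $E[F_\beta(X)]\leq E[F_\beta(Y)]$. Letting $\beta\to\infty$ and applying bounded convergence (since $F_\beta\in[0,1]$) yields the claimed probability inequality.

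The main obstacle is precisely the sign analysis of $\partial^2 F_\beta/\partial z_{ij}\,\partial z_{kl}$: the surrogate has to be engineered so that \emph{both} the within-row and the across-row signs come out uniformly in $z$, in exactly the configuration opposite to conditions~(2)--(3); this is what forces the nested product form chosen above, and one must verify it stays valid for arbitrary $n,m$ rather than just small cases. Subsidiary technical points to verify are the legality of interchanging differentiation and expectation in computing $\Phi'(t)$, which follows from uniform boundedness of $F_\beta$ together with polynomial bounds on its derivatives and standard Gaussian moment estimates, and the $\beta\to\infty$ passage, which uses only pointwise convergence of $F_\beta$ to the target indicator (valid off a Lebesgue-null set in $z$) together with bounded convergence.
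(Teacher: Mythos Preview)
The paper does not prove this theorem; it simply quotes it from Gordon's original papers \cite{Gordon85,Gordon88} and uses it as a black box. So there is no ``paper's own proof'' to compare against.

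That said, your argument is correct and is essentially Gordon's own proof (which in turn extends the Slepian--Kahane interpolation scheme). The nested product surrogate
\[
F_\beta(z)=\prod_i\Bigl[1-\prod_j\bigl(1-g_\beta(z_{ij}-\lambda_{ij})\bigr)\Bigr]
\]
is exactly the right choice: it is the smooth version of $\prod_i\max_j \mathbf{1}\{z_{ij}\geq\lambda_{ij}\}$, and the sign computation you give for the mixed partials (nonpositive within a row, nonnegative across rows) is correct for all $n,m$, since the product structure over rows and the complementary product structure within a row force those signs uniformly. The interpolation derivative formula and the pairing with conditions~(1)--(3) are also correct. The only minor point to add is that the $\beta\to\infty$ limit needs $P(X_{ij}=\lambda_{ij})=P(Y_{ij}=\lambda_{ij})=0$, which holds whenever the individual variances are positive; in the degenerate case the inequality follows by a routine perturbation argument.
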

The following, more simpler, version of the above theorem relates to the expected values.
\begin{theorem}(\cite{Gordon85,Gordon88})
\label{thm:Gordonmesh2} Let $X_{ij}$ and $Y_{ij}$, $1\leq i\leq n,1\leq j\leq m$, be two centered Gaussian processes which satisfy the following inequalities for all choices of indices
\begin{enumerate}
\item $E(X_{ij}^2)=E(Y_{ij}^2)$
\item $E(X_{ij}X_{ik})\geq E(Y_{ij}Y_{ik})$
\item $E(X_{ij}X_{lk})\leq E(Y_{ij}Y_{lk}), i\neq l$.
\end{enumerate}
Then
\begin{equation*}
E(\min_{i}\max_{j}(X_{ij}))\leq E(\min_i\max_j(Y_{ij})).
\end{equation*}
\end{theorem}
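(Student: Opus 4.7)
The plan is to deduce Theorem~\ref{thm:Gordonmesh2} from Theorem~\ref{thm:Gordonmesh1} by converting a uniform family of tail comparisons into a single expectation comparison via the standard layer-cake integration. The key observation is that Theorem~\ref{thm:Gordonmesh1} allows the thresholds $\lambda_{ij}$ to be arbitrary real numbers, so in particular we may take them all equal to a common real parameter $\lambda$ and then sweep $\lambda$ over $(-\infty,\infty)$.

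First I would specialize Theorem~\ref{thm:Gordonmesh1} with $\lambda_{ij}\equiv\lambda$. Since
\[
\bigcap_{i}\bigcup_{j}\{X_{ij}\geq\lambda\}=\{\min_{i}\max_{j}X_{ij}\geq\lambda\},
\]
and similarly for $Y$, this yields the scalar tail comparison
\[
P\bigl(\min_{i}\max_{j}X_{ij}\geq\lambda\bigr)\leq P\bigl(\min_{i}\max_{j}Y_{ij}\geq\lambda\bigr)
\]
for every real $\lambda$. Denote $U=\min_{i}\max_{j}X_{ij}$ and $V=\min_{i}\max_{j}Y_{ij}$; both are integrable because they are bounded in absolute value by the maxima over finitely many centered Gaussians of common variance $E(X_{ij}^2)=E(Y_{ij}^2)$, and standard Gaussian tail estimates give $E|\max_{i,j}|X_{ij}||<\infty$ and similarly for $Y$.

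Next I would apply the layer-cake identity
\[
E(Z)=\int_{0}^{\infty}P(Z>t)\,dt-\int_{0}^{\infty}P(Z<-t)\,dt,
\]
which holds for every integrable real random variable $Z$, to both $U$ and $V$. The pointwise tail bound $P(U\geq\lambda)\leq P(V\geq\lambda)$ gives $P(U>t)\leq P(V>t)$ on $t\geq 0$ and, upon taking complements, $P(U<-t)\geq P(V<-t)$ on $t\geq 0$. Adding the two resulting integral inequalities yields $E(U)\leq E(V)$, which is exactly the conclusion of Theorem~\ref{thm:Gordonmesh2}.

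The argument is essentially mechanical, so the main obstacle is only bookkeeping: one must justify the integrability of $U$ and $V$ (immediate from the Gaussian assumption on the finite collections $X_{ij},Y_{ij}$) and observe that the identification of $P(Z\geq t)$ with $P(Z>t)$ is valid for Lebesgue-almost every $t$ since the distribution of a Gaussian max-min has at most countably many atoms. All of the genuine analytic content of the lemma is carried by Theorem~\ref{thm:Gordonmesh1}, which we are free to invoke, and the present statement is a direct corollary of it.
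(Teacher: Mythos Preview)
Your argument is correct: specializing Theorem~\ref{thm:Gordonmesh1} to constant thresholds $\lambda_{ij}\equiv\lambda$ gives the pointwise tail domination $P(U\geq\lambda)\leq P(V\geq\lambda)$ for $U=\min_i\max_j X_{ij}$ and $V=\min_i\max_j Y_{ij}$, and the layer-cake formula together with the obvious integrability of $U,V$ (finite min--max of Gaussians) then yields $E(U)\leq E(V)$. This is the standard route from the probabilistic comparison to the expectation comparison.

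Note, however, that the paper does not supply its own proof of Theorem~\ref{thm:Gordonmesh2}; the result is simply quoted from \cite{Gordon85,Gordon88} and used as a black box. So there is no ``paper's proof'' to compare against here. Your derivation is a clean self-contained way to recover the expectation statement from the probability statement that the paper also quotes (Theorem~\ref{thm:Gordonmesh1}), and it matches how Gordon himself passes between the two formulations.
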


Now, since all random quantities of interest below will concentrate around its mean values it will be enough to study only their averages. However, since it will not make writing of what we intend to present in the remaining parts of this section substantially more complicated we will present a complete probabilistic treatment and will leave the studying of the expected values for the presentation that we will give in the following section where such a consideration will substantially simplify the exposition.

We will make use of Theorem \ref{thm:Gordonmesh1} through the following lemma (the lemma is an easy consequence of Theorem \ref{thm:Gordonmesh1} and in fact is fairly similar to Lemma 3.1 in \cite{Gordon88}, see also \cite{StojnicHopBnds10,StojnicGardGen13} for similar considerations).
\begin{lemma}
Let $H$ be an $m\times n$ matrix with i.i.d. standard normal components. Let $\g$ and $\h$ be $m\times 1$ and $n\times 1$ vectors, respectively, with i.i.d. standard normal components. Also, let $g$ be a standard normal random variable and let $\zeta_{\lambda,\d}$ be a function of $\x$. Then
\begin{multline}
P(\min_{\|\x\|_2=1,\1^T\d=(1-f_{wb}) m,\d_i\in\{0,1\}}\max_{\|\lambda\|_2=1,\lambda_i\geq 0}(-\lambda^T \diag(\d)H\x+g-\zeta_{\lambda,\d})\geq 0)\\\geq
P(\min_{\|\x\|_2=1,\1^T\d=(1-f_{wb}) m,\d_i\in\{0,1\}}\max_{\|\lambda\|_2=1,\lambda_i\geq 0}(\g^T \diag(\d)\lambda+\h^T\x-\zeta_{\lambda,\d})\geq 0).\label{eq:negproblemma}
\end{multline}\label{lemma:negproblemma}
\end{lemma}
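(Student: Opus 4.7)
The lemma is a direct application of Gordon's Gaussian process comparison (Theorem~\ref{thm:Gordonmesh1}), entirely analogous to how that template was invoked in \cite{StojnicGardGen13,StojnicHopBnds10}, except that here the diagonal mask $\diag(\d)$ is carried along for the ride. The plan is to introduce the two centered Gaussian processes
\[
X_{(\x,\d),\lambda}=-\lambda^T\diag(\d)H\x+g,\qquad Y_{(\x,\d),\lambda}=\g^T\diag(\d)\lambda+\h^T\x,
\]
with $H,\g,\h,g$ mutually independent and standard Gaussian, and to treat $i=(\x,\d)$ as the min-index (over $\|\x\|_2=1$, $\1^T\d=(1-f_{wb})m$, $\d\in\{0,1\}^m$) and $j=\lambda$ as the max-index (over $\|\lambda\|_2=1$, $\lambda_i\ge 0$). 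The scalar $g$ is inserted into the first process purely to equalize variances on the common constraint set, as is standard in the Gordon template.

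The key calculation is the covariance comparison. A direct computation gives $E[X_{(\x,\d),\lambda}X_{(\x',\d'),\lambda'}]=ab+1$ and $E[Y_{(\x,\d),\lambda}Y_{(\x',\d'),\lambda'}]=a+b$, where $a=\langle\diag(\d)\lambda,\diag(\d')\lambda'\rangle$ and $b=\langle\x,\x'\rangle$; in particular the variances coincide. Consequently
\[
E[XX']-E[YY']=(1-a)(1-b),
\]
and since $0\le a\le 1$ (the $\lambda_i$'s are nonnegative and unit-norm, so Cauchy--Schwarz applies to the masked vectors) and $b\le 1$, this difference is always nonnegative. When the min-indices coincide, $b=1$ and the two covariances are equal; when they differ, either $\x=\x'$ (so $b=1$ again and equality holds) or $\x\ne\x'$, in which case $b<1$ and $(1-a)(1-b)\ge 0$. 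This is exactly the pair of comparisons needed to apply Gordon's theorem in the swapped direction, namely with $(Y,X)$ playing the roles that Theorem~\ref{thm:Gordonmesh1} assigns to $(X,Y)$, so as to reverse the conclusion to $P(X\text{-event})\ge P(Y\text{-event})$. Rewriting the event $\{\min_{(\x,\d)}\max_\lambda(X_{(\x,\d),\lambda}-\zeta_{\lambda,\d})\ge 0\}$ as $\bigcap_{(\x,\d)}\bigcup_\lambda\{X_{(\x,\d),\lambda}\ge\zeta_{\lambda,\d}\}$, and likewise for $Y$, brings the statement into exactly the intersection-union form of Theorem~\ref{thm:Gordonmesh1}. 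The fact that $\zeta_{\lambda,\d}$ may also depend on $\x$ is harmless, since Gordon's thresholds are allowed to depend on both $i$ and $j$.

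The only real obstacle is that Theorem~\ref{thm:Gordonmesh1} is stated for finite index sets, whereas here $\x$ and $\lambda$ range over (portions of) continuous spheres. I would handle this by the standard $\varepsilon$-net discretization used in Gordon's own arguments and in \cite{StojnicGardGen13,StojnicHopBnds10}: apply the finite Gordon inequality on $\varepsilon$-nets of the unit sphere for $\x$ and of the nonnegative part of the unit sphere for $\lambda$, then send $\varepsilon\to 0$ using continuity of the bilinear forms in $H,\g,\h$. The $\d$-set is already finite (with $\binom{m}{(1-f_{wb})m}$ elements), so no discretization is needed there, and the mask $\diag(\d)$ enters only through the inner products $a$ above, which are already handled in the covariance comparison.
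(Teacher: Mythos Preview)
Your proposal is correct and follows essentially the same route as the paper: the covariance identity $(1-a)(1-b)\ge 0$ is exactly the paper's equation (\ref{eq:negexplemmaproof4}), and your pairing of the min-index $(\x,\d)$ with the max-index $\lambda$ matches the paper's setup, the only cosmetic difference being that you name the two processes in the opposite order and then swap roles when invoking Theorem~\ref{thm:Gordonmesh1}. Your explicit remarks on the $\varepsilon$-net discretization and on the nonnegativity of $a$ via Cauchy--Schwarz simply fill in details that the paper defers to \cite{Gordon88,StojnicHopBnds10}.
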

\begin{proof}
The proof is basically similar to the proof of Lemma 3.1 in \cite{Gordon88} as well as to the proof of Lemma 7 in \cite{StojnicHopBnds10}. However, one has to be a bit careful about the structures of sets of allowed values for $\lambda,\x,\d$. For completeness we will sketch the core of the argument. The remaining parts follow easily as in Lemma 3.1 in \cite{Gordon88} (or as in the proof of Lemma 7 in \cite{StojnicHopBnds10}). Namely, one starts by defining processes $X_i$ and $Y_i$ in the following way
\begin{equation}
Y_{ij}=(\lambda^{(j)})^T \diag(\d^{(i)}) H\x^{(i)} + g\quad X_{ij}=\g^T\diag(\d^{(i)})\lambda^{(j)}+\h^T\x^{(i)}.\label{eq:negexplemmaproof1}
\end{equation}
Then clearly
\begin{equation}
EY_{ij}^2=EX_{ij}^2=(\lambda^{(j)})^T \diag(\d^{(i)})\diag(\d^{(i)})\lambda^{(j)}+1.\label{eq:negexplemmaproof2}
\end{equation}
One then further has
\begin{eqnarray}
EY_{ij}Y_{ik} & = & (\lambda^{(j)})^T \diag(\d^{(i)})\diag(\d^{(i)})\lambda^{(k)}(\x^{(i)})^T\x^{(i)}+1 \nonumber \\
EX_{ij}X_{ik} & = & (\lambda^{(j)})^T \diag(\d^{(i)})\diag(\d^{(i)})\lambda^{(k)}+(\x^{(i)})^T\x^{(i)},\label{eq:negexplemmaproof3}
\end{eqnarray}
and clearly
\begin{equation}
EX_{ij}X_{ik}=EY_{ij}Y_{ik}.\label{eq:negexplemmaproof31}
\end{equation}
Moreover,
\begin{eqnarray}
EY_{ij}Y_{lk} & = & (\lambda^{(j)})^T \diag(\d^{(i)})\diag(\d^{(l)})\lambda^{(k)}(\x^{(l)})^T\x^{(i)}+1 \nonumber \\
EX_{ij}X_{lk} & = & (\lambda^{(j)})^T \diag(\d^{(i)})\diag(\d^{(l)})\lambda^{(k)}+(\x^{(l)})^T\x^{(i)}.\label{eq:negexplemmaproof32}
\end{eqnarray}
And after a small algebraic transformation
\begin{eqnarray}
\hspace{-.3in}EY_{ij}Y_{lk}-EX_{ij}X_{lk} & = & (1-(\lambda^{(j)})^T \diag(\d^{(i)})\diag(\d^{(l)})\lambda^{(k)})-
(\x^{(l)})^T\x^{(i)}(1-(\lambda^{(j)})^T \diag(\d^{(i)})\diag(\d^{(l)})\lambda^{(k)}) \nonumber \\
& = & (1-(\x^{(l)})^T\x^{(i)})(1-(\lambda^{(j)})^T \diag(\d^{(i)})\diag(\d^{(l)})\lambda^{(k)})\nonumber \\
& \geq & 0.\label{eq:negexplemmaproof4}
\end{eqnarray}
Combining (\ref{eq:negexplemmaproof2}), (\ref{eq:negexplemmaproof31}), and (\ref{eq:negexplemmaproof4}) and using results of Theorem \ref{thm:Gordonmesh1} one then easily obtains (\ref{eq:negproblemma}).
\end{proof}

Let $\zeta_{\lambda,\d}=-\kappa\lambda^T\diag(\d)\1+\epsilon_{5}^{(g)}\sqrt{n}+\xi_{wb}^{(l)}$ with $\epsilon_{5}^{(g)}>0$ being an arbitrarily small constant independent of $n$. We will first look at the right-hand side of the inequality in (\ref{eq:negproblemma}). The following is then the probability of interest
\begin{equation}
P(\min_{\|\x\|_2=1,\1^T\d=(1-f_{wb}) m,\d_i\in\{0,1\}}\max_{\|\lambda\|_2=1,\lambda_i\geq 0}(\g^T\diag(\d)\lambda+\h^T\x+\kappa\lambda^T\diag(\d)\1-\epsilon_{5}^{(g)}\sqrt{n})\geq \xi_{wb}^{(l)}).\label{eq:negprobanal0}
\end{equation}
After solving the minimization over $\x$ one obtains
\begin{multline}
P(\min_{\|\x\|_2=1,\1^T\d=(1-f_{wb}) m,\d_i\in\{0,1\}}\max_{\|\lambda\|_2=1,\lambda_i\geq 0}(\g^T\diag(\d)\lambda+\h^T\x+\kappa\lambda^T\diag(\d)\1-\epsilon_{5}^{(g)}\sqrt{n})\geq \xi_{wb}^{(l)})\\=P(f_{err}^{(r)}(\kappa)-\|\h_i\|_2-\epsilon_{5}^{(g)}\sqrt{n}\geq \xi_{wb}^{(l)}),\label{eq:negprobanal1}
\end{multline}
where
\begin{equation}
f_{err}^{(r)}(\kappa)=\min_{\1^T\d=(1-f_{wb}) m,\d_i\in\{0,1\}}\max_{\|\lambda\|_2=1,\lambda_i\geq 0}(\g^T\diag(\d)\lambda+\kappa\lambda^T\diag(\d)\1).\label{eq:defrferr}
\end{equation}
Since $\h$ is a vector of $n$ i.i.d. standard normal variables it is rather trivial that
\begin{equation}
P(\|\h\|_2<(1+\epsilon_{1}^{(n)})\sqrt{n})\geq 1-e^{-\epsilon_{2}^{(n)} n},\label{eq:devh}
\end{equation}
where $\epsilon_{1}^{(n)}>0$ is an arbitrarily small constant and $\epsilon_{2}^{(n)}$ is a constant dependent on $\epsilon_{1}^{(n)}$ but independent of $n$. Along the same lines, due to the linearity of the objective function in the definition of $f_{err}^{(r)}$ and the fact that $\g$ is a vector of $m$ i.i.d. standard normals, one has
\begin{equation}
P(f_{err}^{(r)}(\kappa)>(1-\epsilon_{1}^{(m)})f_{err}(\kappa)\sqrt{n})\geq 1-e^{-\epsilon_{2}^{(m)} m},\label{eq:devg}
\end{equation}
where
\begin{equation}
f_{err}(\kappa)=\lim_{n\rightarrow \infty}\frac{Ef_{err}^{(r)}(\kappa)}{\sqrt{n}}=\lim_{n\rightarrow\infty}\frac{E\left ( \min_{\1^T\d=(1-f_{wb}) m,\d_i\in\{0,1\}}\max_{\|\lambda\|_2=1,\lambda_i\geq 0}(\g^T\diag(\d)\lambda+\kappa\lambda^T\diag(\d)\1)\right )}{\sqrt{n}},\label{eq:ferr}
\end{equation}
and $\epsilon_{1}^{(m)}>0$ is an arbitrarily small constant and analogously as above $\epsilon_{2}^{(m)}$ is a constant dependent on $\epsilon_{1}^{(m)}$ and
$f_{err}(\kappa)$ but independent of $n$. Then a combination of (\ref{eq:negprobanal1}), (\ref{eq:devh}), and (\ref{eq:devg}) gives
\begin{multline}
P(\min_{\|\x\|_2=1,\1^T\d=(1-f_{wb}) m,\d_i\in\{0,1\}}\max_{\|\lambda\|_2=1,\lambda_i\geq 0}(\g^T\diag(\d)\lambda+\h^T\x+\kappa\lambda^T\diag(\d)\1-\epsilon_{5}^{(g)}\sqrt{n})\geq \xi_{wb}^{(l)})\\
\geq
(1-e^{-\epsilon_{2}^{(m)} m})(1-e^{-\epsilon_{2}^{(n)} n})
P((1-\epsilon_{1}^{(m)})f_{err}(\kappa)\sqrt{n}-(1+\epsilon_{1}^{(n)})\sqrt{n}-\epsilon_{5}^{(g)}\sqrt{n}\geq \xi_{wb}^{(l)}).
\label{eq:negprobanal2}
\end{multline}
If
\begin{eqnarray}
& & (1-\epsilon_{1}^{(m)})f_{err}(\kappa)\sqrt{n}-(1+\epsilon_{1}^{(n)})\sqrt{n}-\epsilon_{5}^{(g)}\sqrt{n}>\xi_{wb}^{(l)}\nonumber \\
& \Leftrightarrow & (1-\epsilon_{1}^{(m)})f_{err}(\kappa)-(1+\epsilon_{1}^{(n)})-\epsilon_{5}^{(g)}>\frac{\xi_{wb}^{(l)}}{\sqrt{n}},\label{eq:negcondxipu}
\end{eqnarray}
one then has from (\ref{eq:negprobanal2})
\begin{equation}
\lim_{n\rightarrow\infty}P(\min_{\|\x\|_2=1,\1^T\d=(1-f_{wb}) m,\d_i\in\{0,1\}}\max_{\|\lambda\|_2=1,\lambda_i\geq 0}(\g^T\diag(\d)\lambda+\h^T\x+\kappa\lambda^T\diag(\d)\1-\epsilon_{5}^{(g)}\sqrt{n})\geq \xi_{wb}^{(l)})\geq 1.\label{eq:negprobanal3}
\end{equation}
To make the result in (\ref{eq:negprobanal3}) operational one needs an estimate for $f_{err}(\kappa)$. In the following subsection we will present a way that can be used to estimate $f_{err}(\kappa)$. Before doing so we will briefly take a look at the left-hand side of the inequality in (\ref{eq:negproblemma}).

The following is then the probability of interest
\begin{equation}
P(\min_{\|\x\|_2=1,\1^T\d=(1-f_{wb}) m,\d_i\in\{0,1\}}\max_{\|\lambda\|_2=1,\lambda_i\geq 0}(\kappa\lambda^T\diag(\d)\1-\lambda^T\diag(\d)H\x+g-\epsilon_{5}^{(g)}\sqrt{n}-\xi_{wb}^{(l)})\geq 0).\label{eq:leftnegprobanal0}
\end{equation}
Since $P(g\geq\epsilon_{5}^{(g)}\sqrt{n})<e^{-\epsilon_{6}^{(g)} n}$ (where $\epsilon_{6}^{(g)}$ is, as all other $\epsilon$'s in this paper are, independent of $n$) from (\ref{eq:leftnegprobanal0}) we have
\begin{multline}
P(\min_{\|\x\|_2=1,\1^T\d=(1-f_{wb}) m,\d_i\in\{0,1\}}\max_{\|\lambda\|_2=1,\lambda_i\geq 0}(\kappa\lambda^T\diag(\d)\1-\lambda^T\diag(\d)H\x+g-\epsilon_{5}^{(g)}\sqrt{n}-\xi_{wb}^{(l)})\geq 0)
\\\leq P(\min_{\|\x\|_2=1,\1^T\d=(1-f_{wb}) m,\d_i\in\{0,1\}}\max_{\|\lambda\|_2=1,\lambda_i\geq 0}(\kappa\lambda^T\diag(\d)\1-\lambda^T\diag(\d)H\x-\xi_{wb}^{(l)})\geq 0)+e^{-\epsilon_{6}^{(g)} n}.\label{eq:leftnegprobanal1}
\end{multline}
When $n$ is large from (\ref{eq:leftnegprobanal1}) we then have
\begin{multline}
\hspace{-.7in}\lim_{n\rightarrow \infty}P(\min_{\|\x\|_2=1,\1^T\d=(1-f_{wb}) m,\d_i\in\{0,1\}}\max_{\|\lambda\|_2=1,\lambda_i\geq 0}(\kappa\lambda^T\diag(\d)\1-\lambda^T\diag(\d)H\x+g-\epsilon_{5}^{(g)}\sqrt{n}-\xi_{wb}^{(l)})\geq 0)
\\\leq  \lim_{n\rightarrow\infty}P(\min_{\|\x\|_2=1,\1^T\d=(1-f_{wb}) m,\d_i\in\{0,1\}}\max_{\|\lambda\|_2=1,\lambda_i\geq 0}(\kappa\lambda^T\diag(\d)\1-\lambda^T\diag(\d)H\x-\xi_{wb}^{(l)})\geq 0)\\
 =  \lim_{n\rightarrow\infty}P(\min_{\|\x\|_2=1}\max_{\|\lambda\|_2=1,\lambda_i\geq 0}(\kappa\lambda^T\diag(\d)\1-\lambda^T\diag(\d)H\x)\geq \xi_{wb}^{(l)}).\label{eq:leftnegprobanal2}
\end{multline}
Assuming that (\ref{eq:negcondxipu}) holds, then a combination of (\ref{eq:negproblemma}), (\ref{eq:negprobanal3}), and (\ref{eq:leftnegprobanal2}) gives
\begin{multline}
\hspace{-.5in}\lim_{n\rightarrow\infty}P(\min_{\|\x\|_2=1,\1^T\d=(1-f_{wb}) m,\d_i\in\{0,1\}}\max_{\|\lambda\|_2=1,\lambda_i\geq 0}(\kappa\lambda^T\diag(\d)\1-\lambda^T\diag(\d)H\x)\geq \xi_{wb}^{(l)})\\\geq \lim_{n\rightarrow\infty}P(\min_{\|\x\|_2=1,\1^T\d=(1-f_{wb}) m,\d_i\in\{0,1\}}\max_{\|\lambda\|_2=1,\lambda_i\geq 0}(\g^T\diag(\d)\lambda+\h^T\x+\kappa\lambda^T\diag(\d)\1-\epsilon_{5}^{(g)}\sqrt{n})\geq \xi_{wb}^{(l)})\geq 1.\label{eq:leftnegprobanal3}
\end{multline}
Of course, to have (\ref{eq:negcondxipu}) to hold and consequently to be able to use (\ref{eq:leftnegprobanal3}) one needs an estimate of $f_{err}(\kappa)$. As mentioned above, in the following subsection we will present a way that can be used to estimate $f_{err}(\kappa)$. Also, it is relatively easy to observe from the previous derivation that a lower bound on $f_{err}(\kappa)$ is sufficient. We will in fact present a way to determine a lower bound on $f_{err}(\kappa)$ (while we will not prove it, we do mention that the way we will present in the next subsection is in fact powerful enough to actually provide a precise estimate of $f_{err}(\kappa)$).

%%%%%%%%%%%%%%%%%%%%%%%%%%%%%%%%%%%%%%%%%%%%%%%%%%%%%%%%%%%%%%%%%
\subsection{Estimating $f_{err}(\kappa)$}
\label{sec:estferr}
%%%%%%%%%%%%%%%%%%%%%%%%%%%%%%%%%%%%%%%%%%%%%%%%%%%%%%%%%%%%%%%%%

We recall from (\ref{eq:ferr}) that $f_{err}(\kappa)=\lim_{n\rightarrow\infty}\frac{Ef_{err}^{(r)}(\kappa)}{\sqrt{n}}$ and from (\ref{eq:defrferr}) that
\begin{equation}
f_{err}^{(r)}(\kappa)=\min_{\1^T\d=(1-f_{wb}) m,\d_i\in\{0,1\}}\max_{\|\lambda\|_2=1,\lambda_i\geq 0}(\g^T\diag(\d)\lambda+\kappa\lambda^T\diag(\d)\1).\label{eq:defrferr1}
\end{equation}
We will first focus on $f_{err}^{(r)}(\kappa)$, i.e. on (\ref{eq:defrferr1}). To that end we will rewrite the above optimization problem in the following way
\begin{eqnarray}
f_{err}^{(r)}(\kappa) & = & \min_{\1^T\d=(1-f_{wb}) m,\d_i\in\{0,1\}}\max_{\|\lambda\|_2=1,\lambda_i\geq 0}(\g^T\diag(\d)\lambda+\kappa\lambda^T\diag(\d)\1)\nonumber \\
& = & \min_{\1^T\d=(1-f_{wb}) m,\d_i\in\{0,1\}}\|(\diag(\d)(\g+\kappa\1))_+\|_2\nonumber \\
& = & \sqrt{\min_{\1^T\d=(1-f_{wb}) m,\d_i\in\{0,1\}}\|(\diag(\d)(\g+\kappa\1))_+\|_2^2},\label{eq:defrferr2}
%& = & \min_{\1^T\d=(1-f_{wb}) m,\d_i\in\{0,1\}}\max_{\lambda_i\geq 0}\min_{\gamma_{wb}\geq 0}(\g^T\diag(\d)\lambda+\kappa\lambda^T\diag(\d)\1-\gamma_{wb}\sum_{i=1}^{m}\lambda_i^2+\gamma_{wb})\nonumber \\
%& = & \min_{\1^T\d=(1-f_{wb}) m,\d_i\in\{0,1\}}\min_{\gamma_{wb}\geq 0}\max_{\lambda_i\geq 0}(\g^T\diag(\d)\lambda+\kappa\lambda^T\diag(\d)\1-\gamma_{wb}\sum_{i=1}^{m}\lambda_i^2+\gamma_{wb})\nonumber \\
%& = & \min_{\gamma_{wb}\geq 0}\min_{\1^T\d=(1-f_{wb}) m,\d_i\in\{0,1\}}\max_{\lambda_i\geq 0}(\g^T\diag(\d)\lambda+\kappa\lambda^T\diag(\d)\1-\gamma_{wb}\sum_{i=1}^{m}\lambda_i^2+\gamma_{wb})
%& = & \min_{\gamma_{wb}\geq 0}\min_{\1^T\d=(1-f_{wb}) m,\d_i\in\{0,1\}}\frac{\sum_{i=1}^{m}(\max(\d_i(\g_i+\kappa),0))^2}{4\gamma_{wb}}+\gamma_{wb})\nonumber \\
\end{eqnarray}
where $(\diag(\d)(\g+\kappa\1))_+$ is vector $(\diag(\d)(\g+\kappa\1))$ with negative components replaced by zeros.
After a few additional algebraic transformations we have
\begin{eqnarray}
f_{err}^{(r)}(\kappa)
& = & \sqrt{\min_{\1^T\d=(1-f_{wb}) m,\d_i\in\{0,1\}}\|(\diag(\d)(\g+\kappa\1))_+\|_2^2}\nonumber \\
& = & \sqrt{\min_{\1^T\d=(1-f_{wb}) m,\d_i\in\{0,1\}}\max_{\nu_{wb}\geq 0}(\|(\diag(\d)(\g+\kappa\1))_+\|_2^2-\nu_{wb}\d^T\1+\nu_{wb}(1-f_{wb})m)}\nonumber \\
& \geq & \sqrt{\max_{\nu_{wb}\geq 0}\min_{\1^T\d=(1-f_{wb}) m,\d_i\in\{0,1\}}(\|(\diag(\d)(\g+\kappa\1))_+\|_2^2-\nu_{wb}\d^T\1+\nu_{wb}(1-f_{wb})m)}\nonumber \\
& = & \sqrt{\max_{\nu_{wb}\geq 0}\left (\sum_{i=1}^{m}\left (\min(0,(\g_i+\kappa)_+^2-\nu_{wb})\right )+\nu_{wb}(1-f_{wb})m\right )}\nonumber \\
& = & \sqrt{\max_{\nu_{wb}\geq 0}\left (\sum_{i=1}^{m}\left (\min(0,\max((\g_i+\kappa),0)^2-\nu_{wb})\right )+\nu_{wb}(1-f_{wb})m\right )}\nonumber \\
.\label{eq:defrferr3}
%\min_{\gamma_{wb}\geq 0}\min_{\1^T\d=(1-f_{wb}) m,\d_i\in\{0,1\}}\max_{\lambda_i\geq 0}(\g^T\diag(\d)\lambda+\kappa\lambda^T\diag(\d)\1-\gamma_{wb}\sum_{i=1}^{m}\lambda_i^2+\gamma_{wb})\nonumber \\
%& = & \min_{\gamma_{wb}\geq 0}\min_{\1^T\d=(1-f_{wb}) m,\d_i\in\{0,1\}}\max_{\nu\geq 0}\max_{\lambda_i\geq 0}(\g^T\diag(\d)\lambda+\kappa\lambda^T\diag(\d)\1-\gamma_{wb}\sum_{i=1}^{m}\lambda_i^2+\gamma_{wb}-\nu_{wb}\d^T\1+\nu_{wb}(1-f_{wb}))\nonumber \\
%& = & \min_{\gamma_{wb}\geq 0}\min_{\1^T\d=(1-f_{wb}) m,\d_i\in\{0,1\}}\frac{\sum_{i=1}^{m}(\max(\d_i(\g_i+\kappa),0))^2}{4\gamma_{wb}}+\gamma_{wb})\nonumber \\
\end{eqnarray}
From (\ref{eq:defrferr3}) we further have
\begin{eqnarray}
\hspace{-.3in}f_{err}(\kappa)=\lim_{n\rightarrow\infty}
\frac{Ef_{err}^{(r)}(\kappa)}{\sqrt{n}}
  & \geq & \lim_{n\rightarrow\infty}
\frac{\sqrt{\max_{\nu_{wb}\geq 0}\left (\sum_{i=1}^{m}\left (E\min(0,\max((\g_i+\kappa),0)^2-\nu_{wb})\right )+\nu_{wb}(1-f_{wb})m\right )}}{\sqrt{n}}\nonumber \\
  & \geq &
\sqrt{\max_{\nu_{wb}\geq 0}\left (\alpha\left (E\min(0,\max((\g_i+\kappa),0)^2-\nu_{wb})\right )+\nu_{wb}(1-f_{wb})\alpha\right )}.\label{eq:defrferr4}
\end{eqnarray}
The above result is already operational and one can use it to establish the bound on the storage capacity when a fraction of errors $f_{wb}$ is allowed. However, since the integrals are not that complicated one can be a bit more explicit (this will also be helpful in showing that the above bound indeed matches the results obtained in \cite{GarDer88}). To that end we have
\begin{eqnarray}
E\min(0,\max((\g_i+\kappa),0)^2-\nu_{wb}) &  = & -\nu_{wb}\int_{-\infty}^{\sqrt{\nu_{wb}}-\kappa}\frac{e^{-\frac{\g_i^2}{2}}d\g_i}{\sqrt{2\pi}}+
\frac{1}{\sqrt{2\pi}}\int_{-\kappa}^{\sqrt{\nu_{wb}}-\kappa}(\g_i+\kappa)^2e^{-\frac{\g_i^2}{2}}d\g_i\nonumber\\
& = &-\frac{\nu_{wb}}{2}+\frac{\nu_{wb}}{2}\mbox{erf}(-\frac{\sqrt{\nu_{wb}}-\kappa}{\sqrt{2}})+
\frac{1}{\sqrt{2\pi}}\int_{-\kappa}^{\sqrt{\nu_{wb}}-\kappa}(\g_i+\kappa)^2e^{-\frac{\g_i^2}{2}}d\g_i.\nonumber \\\label{eq:defrferr5}
\end{eqnarray}
To optimize over $\nu_{wb}$ we take the derivative
\begin{multline}
\frac{d(E\min(0,\max((\g_i+\kappa),0)^2-\nu_{wb})+\nu_{wb}(1-f_{wb}))}{d\nu_{wb}}  =  -\frac{1}{2}\mbox{erfc}(-\frac{\sqrt{\nu_{wb}}-\kappa}{\sqrt{2}})-\frac{\sqrt{\nu_{wb}}}{2\sqrt{2\pi}}e^{-\frac{(\sqrt{\nu_{wb}}-\kappa)^2}{2}}
\\+\frac{\sqrt{\nu_{wb}}}{2\sqrt{2\pi}}e^{-\frac{(\sqrt{\nu_{wb}}-\kappa)^2}{2}}+(1-f_{wb}).\label{eq:defrferr6}
\end{multline}
Setting the above derivative to zero gives the following condition for optimal $\nu_{wb}$, $\widehat{\nu_{wb}}$
\begin{equation}
\frac{1}{2}erfc(-\frac{\sqrt{\widehat{\nu_{wb}}}-\kappa}{\sqrt{2}})=1-f_{wb}.\label{eq:defrferr7}
\end{equation}
From (\ref{eq:defrferr7}) one then easily finds
\begin{equation}
\widehat{\nu_{wb}}=(\sqrt{2}\mbox{erfinv}(1-2f_{wb})+\kappa)^2.\label{eq:defrferr8}
\end{equation}
A combination of (\ref{eq:defrferr4}), (\ref{eq:defrferr5}), (\ref{eq:defrferr7}), and (\ref{eq:defrferr8}) then gives
\begin{eqnarray}
\hspace{-.3in}f_{err}(\kappa)=\lim_{n\rightarrow\infty}
\frac{Ef_{err}^{(r)}(\kappa)}{\sqrt{n}}
  & \geq & \lim_{n\rightarrow\infty}
\frac{\sqrt{\max_{\nu_{wb}\geq 0}\left (\sum_{i=1}^{m}\left (E\min(0,\max((\g_i+\kappa),0)^2-\nu_{wb})\right )+\nu_{wb}(1-f_{wb})m\right )}}{\sqrt{n}}\nonumber \\
  & = &
\sqrt{\alpha\frac{1}{\sqrt{2\pi}}\int_{-\kappa}^{\sqrt{\widehat{\nu_{wb}}}-\kappa}(\g_i+\kappa)^2e^{-\frac{\g_i^2}{2}}d\g_i}=\sqrt{\alpha\widehat{f_{err}}(\kappa)}.\label{eq:defrferr9}
\end{eqnarray}
Roughly speaking, if $\xi_{wb}^{(l)}$ is such that (\ref{eq:negcondxipu}) holds with $f_{err}(\kappa)$ replaced by the quantity on the right-hand side of the second equality in (\ref{eq:defrferr9}) then (\ref{eq:negprobanal3}) holds as well. This then establishes a probabilistic lower bound on $\xi_{wb}$ and as long as such lower bound is positive the optimization problem in (\ref{eq:defprobucor1}) will be infeasible. Equaling such a lower bound with zero then gives the condition to compute an upper bound on the storage capacity when a fraction no larger than $f_{wb}$ of incorrectly stored patterns is allowed. Also, while for our purposes here all inequalities in this subsection are sufficient, we mention without proving that it is actually true that they all can be replaced with equalities.

We summarize the above results in the following theorem.

\begin{theorem}
Let $H$ be an $m\times n$ matrix with i.i.d. standard normal components. Let $n$ be large and let $m=\alpha n$, where $\alpha>0$ is a constant independent of $n$. Let $\xi_{wb}$ be as in (\ref{eq:uncorminmaxerr}) and let $\kappa$ be a scalar constant independent of $n$. Let all $\epsilon$'s be arbitrarily small constants independent of $n$. Further, let $\g_i$ be a standard normal random variable and set
\begin{eqnarray}
\widehat{\nu_{wb}} & = & (\sqrt{2}\mbox{erfinv}(1-2f_{wb})+\kappa)^2\nonumber \\
\widehat{f_{err}}(\kappa) & = & \frac{1}{\sqrt{2\pi}}\int_{-\kappa}^{\sqrt{\widehat{\nu_{wb}}}-\kappa}(\g_i+\kappa)^2e^{-\frac{\g_i^2}{2}}d\g_i.\label{eq:thmerrc30}
\end{eqnarray}
Let $\xi_{wb}^{(l)}$ be a scalar such that
\begin{equation}
(1-\epsilon_{1}^{(m)})\sqrt{\alpha \widehat{f_{err}}(\kappa)}-(1+\epsilon_{1}^{(n)})-\epsilon_{5}^{(g)}  >  \frac{\xi_{wb}^{(l)}}{\sqrt{n}}.\label{eq:condxinthmstoc30}
\end{equation}
Then
\begin{equation}
\hspace{-.3in} \lim_{n\rightarrow\infty}P(\xi_{wb}\geq \xi_{wb}^{(l)})=\lim_{n\rightarrow\infty}P(\min_{\|\x\|_2=1,\1^T\d=(1-f_{wb}) m,\d_i\in\{0,1\}}\max_{\|\lambda\|_2=1,\lambda_i\geq 0}(\kappa\lambda^T\diag(\d)\1-\lambda^T\diag(\d)H\x)\geq \xi_{wb}^{(l)})\geq 1. \label{eq:probthmstoc30poskappa}
\end{equation}
\label{thm:Stoc30err}
\end{theorem}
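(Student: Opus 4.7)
The plan is to apply a Gordon-type comparison inequality (via Lemma \ref{lemma:negproblemma}) to replace the coupled bilinear form $\lambda^T \diag(\d) H \x$ with the decoupled quantity $\g^T \diag(\d) \lambda + \h^T \x$, where $\g$ and $\h$ are independent standard Gaussian vectors of lengths $m$ and $n$. Specifically, I would choose $\zeta_{\lambda,\d} = -\kappa \lambda^T \diag(\d) \1 + \epsilon_5^{(g)} \sqrt{n} + \xi_{wb}^{(l)}$, so that Lemma \ref{lemma:negproblemma} directly compares the probability that the original min-max exceeds $\xi_{wb}^{(l)}$ to the probability that the decoupled auxiliary min-max does. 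The extra standalone Gaussian $g$ appearing on the left side of the lemma is negligible at the scale $\sqrt{n}$, since $P(g \geq \epsilon_5^{(g)} \sqrt{n}) \leq e^{-\epsilon_6^{(g)} n}$, and it is absorbed into the $\epsilon_5^{(g)} \sqrt{n}$ slack.

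On the auxiliary side, the minimization over $\x$ with $\|\x\|_2 = 1$ decouples from $\d$ and $\lambda$ and contributes exactly $-\|\h\|_2$, for which Gaussian concentration yields $\|\h\|_2 \leq (1+\epsilon_1^{(n)})\sqrt{n}$ with overwhelming probability. The remaining quantity $f_{err}^{(r)}(\kappa)$ defined in (\ref{eq:defrferr}) concentrates around its normalized mean $\sqrt{n}\, f_{err}(\kappa)$ by Gaussian Lipschitz concentration, because after carrying out the maximization over the unit-norm nonnegative $\lambda$ and minimization over the $\{0,1\}$-valued $\d$, the resulting function of $\g$ is $1$-Lipschitz. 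Combining these two concentration facts with the hypothesis (\ref{eq:condxinthmstoc30}) makes the auxiliary event hold with overwhelming probability, and then Lemma \ref{lemma:negproblemma} transfers this back to the original $\xi_{wb}$.

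The core computational step, and where most of the real work lives, is to produce the explicit lower bound $f_{err}(\kappa) \geq \sqrt{\alpha\, \widehat{f_{err}}(\kappa)}$. For fixed $\d$, the maximization over $\{\lambda : \|\lambda\|_2 = 1,\ \lambda_i \geq 0\}$ of $\g^T \diag(\d)\lambda + \kappa \lambda^T \diag(\d)\1$ is closed-form and equals $\|(\diag(\d)(\g + \kappa \1))_+\|_2$. I would then relax the cardinality constraint $\d^T \1 = (1-f_{wb})m$ by introducing a Lagrange multiplier $\nu_{wb} \geq 0$ and swap min and max using weak duality, which gives an inequality in the direction needed for a lower bound on $f_{err}^{(r)}(\kappa)$. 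The inner minimization over $\d_i \in \{0,1\}$ then separates coordinate-wise and each term reduces to $\min(0,\max(\g_i+\kappa,0)^2-\nu_{wb})$. Taking expectations and passing to $n \to \infty$ replaces the sum by $\alpha$ times a single Gaussian integral and leaves a one-dimensional concave maximization in $\nu_{wb}$, whose stationarity condition is $\tfrac{1}{2}\mbox{erfc}(-(\sqrt{\widehat{\nu_{wb}}}-\kappa)/\sqrt{2}) = 1 - f_{wb}$, yielding the explicit $\widehat{\nu_{wb}}$ and $\widehat{f_{err}}(\kappa)$ of the theorem statement.

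The main obstacle is really this duality step: I must be sure the swap of min and max costs nothing in the useful direction, i.e.\ that weak duality produces a \emph{lower} bound on $f_{err}^{(r)}(\kappa)$, which is exactly what is needed because Theorem \ref{thm:Stoc30err} only asserts a lower bound on $\xi_{wb}$. (Strong duality would actually hold here and promote the inequality to an equality, but I would not pursue it since the upper-bound-on-capacity conclusion does not require it.) Once this lower bound on $f_{err}(\kappa)$ is in hand, chaining Lemma \ref{lemma:negproblemma} with the concentration bounds for $\|\h\|_2$ and $f_{err}^{(r)}(\kappa)$ and the tail bound on $g$ yields (\ref{eq:probthmstoc30poskappa}) under the hypothesis (\ref{eq:condxinthmstoc30}).
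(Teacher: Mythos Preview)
Your proposal is correct and follows essentially the same route as the paper: the same choice of $\zeta_{\lambda,\d}$ in Lemma~\ref{lemma:negproblemma}, the same decoupling into $-\|\h\|_2$ and $f_{err}^{(r)}(\kappa)$, the same handling of the stray Gaussian $g$, and the same Lagrangian/weak-duality computation of the lower bound $f_{err}(\kappa)\geq \sqrt{\alpha\,\widehat{f_{err}}(\kappa)}$ culminating in the stationarity condition for $\widehat{\nu_{wb}}$. Your justification of concentration for $f_{err}^{(r)}(\kappa)$ via $1$-Lipschitzness is in fact slightly cleaner than the paper's appeal to ``linearity of the objective,'' but the substance is identical.
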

\begin{proof}
Follows from the discussion presented above.
\end{proof}

In a more informal language (as earlier, essentially ignoring all technicalities and $\epsilon$'s) one has that as long as
\begin{equation}
\alpha>\frac{1}{\widehat{f_{err}}(\kappa)},\label{eq:condalphauncorlberr}
\end{equation}
the problem in (\ref{eq:feas1err}) will be infeasible with overwhelming probability. It is an easy exercise to show that the right hand side of (\ref{eq:condalphauncorlberr}) matches the right-hand side of (\ref{eq:alphafromxgar}) (one should simply think of $x$ in (\ref{eq:alphafromxgar}) as $\sqrt{\widehat{\nu_{wb}}}$). This is then enough to conclude that the prediction for the storage capacity given in \cite{GarDer88} for the case when a fraction of errors no larger than $f_{wb}$ is allowed is in fact a rigorous upper bound on the true value of the corresponding storage capacity.

The results obtained based on the above theorem as well as those predicted based on the replica theory and given in (\ref{eq:alphafromxgar}) (and of course in \cite{GarDer88}) are presented in Figure \ref{fig:fminalphacc30}. To be in a complete agreement with what was done in \cite{GarDer88} we selected three different cases for $\kappa$, namely, $\kappa\in\{0,0.5,1\}$ (these are of course the same cases selected in \cite{GarDer88}). For the values of $\alpha$ that are to the right of the given curve the memory will have more than $f_{wb}m$ incorrectly stored patterns with overwhelming probability. Also, we do mention without going into further details that one can create similar curves for negative $\kappa$ as well. While the corresponding mathematical problems are very interesting, we chose to present only the positive $\kappa$ results. This is mainly done because the negative $\kappa$ case may not be of primary interest in the context of neural networks and storage capacities of memories induced by them. As a consequence we will present the discussion in this direction elsewhere.

\begin{figure}[htb]
%%%%%\begin{minipage}[b]{1.0\linewidth}
\centering
\centerline{\epsfig{figure=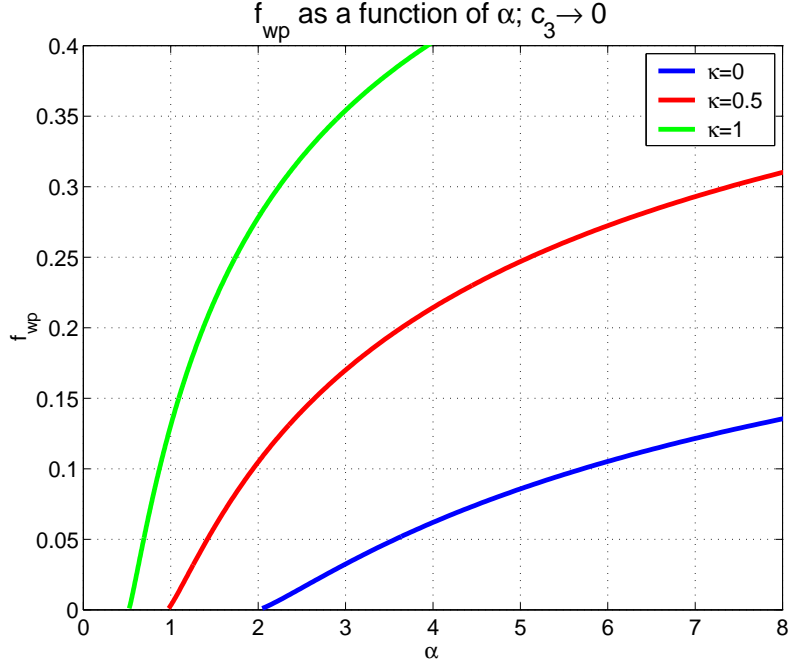,width=10.5cm,height=9cm}}
%%%%%%\end{minipage}
\caption{$f_{wb}$ as a function of $\alpha$ (or alternatively, $\alpha$ as a function of $f_{wb}$) for $\kappa\in\{0,0.5,1\}$}
\label{fig:fminalphacc30}
\end{figure}

%%%%%%%%%%%%%%%%%%%%%%%%%%%%%%%%%%%%%%%%%%%%%%%%%%%%%%%%%%%%%%%%%
\section{Lowering the storage capacity}
\label{sec:sphpererrorslow}
%%%%%%%%%%%%%%%%%%%%%%%%%%%%%%%%%%%%%%%%%%%%%%%%%%%%%%%%%%%%%%%%%

The results we presented in the previous section provide a rigorous upper bound on the storage capacity of the spherical perceptron when a fraction $f_{wb}$ of stored patterns is allowed to be stored erroneously. Given known results for the storage capacity of the spherical perceptron in the standard case (i.e. when $f_{wb}\rightarrow 0$) one may be tempted to believe that some of the above results are actually exact. The reasoning could be along the following lines: as shown in \cite{SchTir02,SchTir03} and confirmed in\cite{TalBook,StojnicGardGen13}, when $\kappa\geq 0$ the statistical mechanics predictions for the storage capacity of the standard spherical perceptron are actually correct. So, one may continue such a reasoning and predict that the statistical mechanics type of observations may be correct when $\kappa\geq 0$ even when it comes to the storage capacities when the limited errors are allowed. On the other hand, a very strong argument against such a logic would be that the limited errors introduce a combinatorial aspect to the problem at hand and the replica symmetry type of statistical mechanics approach may stop being exact. In this section we will present a collection of results that can be used to potentially lower the upper bounds for the storage capacity when the errors are allowed thereby opening an avenue for rigorously showing that the replica symmetry predictions are (as shown in the previous section) only upper bounds. In fact, a limited collection of numerical results that we will present below indicates that it may indeed be true that in certain range of problem parameters (essentially in certain range of $(\alpha,\kappa,f_{wb})$ space) the results presented in the previous section are indeed only the upper bounds.

Before proceeding further with the presentation of the strategy we believe can be used for lowering the upper bounds from the previous section, we first recall on a few technical details from previous sections that we will need here again. We start by recalling on the optimization problem that we will consider here. As is probably obvious, it is basically the one given in (\ref{eq:uncorminmaxerr})
\begin{eqnarray}
\xi_{wb}=\min_{\x,\d} \max_{\lambda\geq 0} & &  \kappa\lambda^T\diag(\d)\1- \lambda^T\diag(\d) H\x \nonumber \\
\mbox{subject to} & & \|\lambda\|_2= 1\nonumber \\
& & \sum_{i=1}^n \d_i=(1-f_{wb})m\nonumber\\
& & \d_i\in\{0,1\},1\leq i\leq m\nonumber \\
& & \|\x\|_2=1.\label{eq:uncorminmaxerrlow}
\end{eqnarray}
where $\1$ is an $m$-dimensional column vector of all $1$'s. As mentioned below (\ref{eq:uncorminmaxerr}), a probabilistic characterization of the sign of $\xi_{wb}$ would be enough to determine the storage capacity or its bounds. Below, we provide a way that can be used to probabilistically characterize $\xi_{wb}$. Moreover, as mentioned at the beginning of the previous section, since $\xi_{wb}$ will concentrate around its mean for our purposes here it will then be enough to study only its mean $E\xi_{wb}$. We do so by relying on the strategy developed in \cite{StojnicMoreSophHopBnds10} (and employed in \cite{StojnicGardSphNeg13}) and ultimately on the following set of results from \cite{Gordon85}. (The following theorem presented in \cite{StojnicMoreSophHopBnds10} is in fact a slight alternation of the original results from \cite{Gordon85}.)
\begin{theorem}(\cite{Gordon85})
\label{thm:Gordonneg1} Let $X_{ij}$ and $Y_{ij}$, $1\leq i\leq n,1\leq j\leq m$, be two centered Gaussian processes which satisfy the following inequalities for all choices of indices
\begin{enumerate}
\item $E(X_{ij}^2)=E(Y_{ij}^2)$
\item $E(X_{ij}X_{ik})\geq E(Y_{ij}Y_{ik})$
\item $E(X_{ij}X_{lk})\leq E(Y_{ij}Y_{lk}), i\neq l$.
\end{enumerate}
Let $\psi_{ij}()$ be increasing functions on the real axis. Then
\begin{equation*}
E(\min_{i}\max_{j}\psi_{ij}(X_{ij}))\leq E(\min_{i}\max_{j}\psi_{ij}(Y_{ij})).
\end{equation*}
Moreover, let $\psi_{ij}()$ be decreasing functions on the real axis. Then
\begin{equation*}
E(\max_{i}\min_{j}\psi_{ij}(X_{ij}))\geq E(\max_{i}\min_{j}\psi_{ij}(Y_{ij})).
\end{equation*}
\begin{proof}
The proof of all statements but the last one is of course given in \cite{Gordon85}. The proof of the last statement trivially follows and in a slightly different scenario is given for completeness in \cite{StojnicMoreSophHopBnds10}.
\end{proof}
\end{theorem}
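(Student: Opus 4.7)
The plan is to reduce Theorem~\ref{thm:Gordonneg1} to the probabilistic comparison result Theorem~\ref{thm:Gordonmesh1} already stated in the excerpt, via a stochastic-dominance argument. The monotonicity of each $\psi_{ij}$ lets us translate the event $\{\psi_{ij}(X_{ij}) \geq t\}$ into an event of the form $\{X_{ij} \geq \lambda_{ij}\}$ for a threshold $\lambda_{ij}$ depending on $t$, at which point Theorem~\ref{thm:Gordonmesh1} applies directly with $\lambda_{ij} = \psi_{ij}^{-1}(t)$, where $\psi_{ij}^{-1}(t) := \inf\{x : \psi_{ij}(x) \geq t\}$ is the generalized inverse.

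More concretely, for the increasing case I would first write
\begin{equation*}
\{\min_i \max_j \psi_{ij}(X_{ij}) \geq t\} = \bigcap_i \bigcup_j \{\psi_{ij}(X_{ij}) \geq t\} = \bigcap_i \bigcup_j \{X_{ij} \geq \psi_{ij}^{-1}(t)\},
\end{equation*}
where the last equality uses the monotonicity of $\psi_{ij}$. Setting $\lambda_{ij} = \psi_{ij}^{-1}(t)$ and invoking Theorem~\ref{thm:Gordonmesh1} then yields
\begin{equation*}
P(\min_i \max_j \psi_{ij}(X_{ij}) \geq t) \leq P(\min_i \max_j \psi_{ij}(Y_{ij}) \geq t)
\end{equation*}
for every real $t$. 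Integrating this stochastic dominance via the standard identity $E[Z] = \int_0^{\infty} P(Z \geq t)\,dt - \int_{-\infty}^0 P(Z \leq t)\,dt$ produces $E[\min_i \max_j \psi_{ij}(X_{ij})] \leq E[\min_i \max_j \psi_{ij}(Y_{ij})]$, which is the first assertion.

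The decreasing case is handled analogously: if each $\psi_{ij}$ is decreasing, then $\{\psi_{ij}(X_{ij}) \leq t\} = \{X_{ij} \geq \psi_{ij}^{-1}(t)\}$ with the appropriate generalized inverse, so
\begin{equation*}
\{\max_i \min_j \psi_{ij}(X_{ij}) \leq t\} = \bigcap_i \bigcup_j \{X_{ij} \geq \psi_{ij}^{-1}(t)\},
\end{equation*}
and the same application of Theorem~\ref{thm:Gordonmesh1} gives $P(\max_i \min_j \psi_{ij}(X_{ij}) \leq t) \leq P(\max_i \min_j \psi_{ij}(Y_{ij}) \leq t)$ for all $t$; integrating the same way yields the reversed expectation inequality.

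The main point requiring care is monotonicity-and-integrability bookkeeping rather than any new Gaussian computation. If the $\psi_{ij}$ are only weakly monotone, the identification of $\{\psi_{ij}(X_{ij}) \geq t\}$ with a half-line in $X_{ij}$ relies on the generalized inverse, but since the $X_{ij}$ are absolutely continuous the ambiguous boundary events have probability zero and do not affect the bound. Convergence of the defining expectations is automatic whenever the $\psi_{ij}$ have at most polynomial growth, because centered Gaussians have moments of all orders; if one wishes to be fully rigorous one can first truncate $\psi_{ij}$ at levels $\pm N$, obtain the inequality for bounded $\psi_{ij}$, and then pass to the limit $N \to \infty$ by monotone convergence. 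The substance of the argument is thus fully carried by Theorem~\ref{thm:Gordonmesh1}; no separate Gaussian interpolation is needed.
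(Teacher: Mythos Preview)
Your reduction is correct and is precisely the standard way to pass from the probabilistic comparison (Theorem~\ref{thm:Gordonmesh1}) to the expectation comparison with monotone test functions: the decomposition $\{\min_i\max_j\psi_{ij}(X_{ij})\geq t\}=\bigcap_i\bigcup_j\{X_{ij}\geq\psi_{ij}^{-1}(t)\}$ (and its dual for the decreasing case) is valid over finite index sets, Theorem~\ref{thm:Gordonmesh1} applies with $\lambda_{ij}=\psi_{ij}^{-1}(t)$, and the tail-integration identity then converts stochastic dominance into the desired expectation inequality. Your remarks about weak monotonicity and truncation/monotone convergence adequately handle the only technical wrinkles.

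By way of comparison, the paper does not actually prove this statement: its ``proof'' consists solely of a citation to \cite{Gordon85} for the increasing case and to \cite{StojnicMoreSophHopBnds10} for the decreasing one, with the remark that the latter ``trivially follows.'' Your argument is therefore not a different route so much as an explicit, self-contained derivation where the paper is content to point to the literature. The advantage of what you have written is that it makes transparent exactly how Theorem~\ref{thm:Gordonneg1} sits on top of Theorem~\ref{thm:Gordonmesh1}, and in particular that no fresh Gaussian interpolation is needed beyond what is already encapsulated there.
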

The strategy that we will present below will utilize the above theorem to lift the above mentioned lower bound on $\xi_{wb}$ (of course since we talk in probabilistic terms, under bound on $\xi_{wb}$ we essentially assume a bound on $E\xi_{wb}$). We do mention that in Section \ref{sec:sphpererrorsrig} we relied on a variant of the above theorem to create a probabilistic lower bound on $\xi_{wb}$. However, the strategy employed in Section \ref{sec:sphpererrorsrig} relied only on a basic version of the above theorem which assumes $\psi_{ij}(x)=x$. Here, we will substantially upgrade the strategy from Section \ref{sec:sphpererrorsrig} by looking at a very simple (but way better) different version of $\psi_{ij}()$.

%%%%%%%%%%%%%%%%%%%%%%%%%%%%%%%%%%%%%%%%%%%%%%%%%%%%%%%%%%%%%%%%%
\subsection{Lifting lower bound on $\xi_{wb}$}
\label{sec:uncorgardlb}
%%%%%%%%%%%%%%%%%%%%%%%%%%%%%%%%%%%%%%%%%%%%%%%%%%%%%%%%%%%%%%%%%

In \cite{StojnicMoreSophHopBnds10,StojnicGardSphNeg13} we established lemmas very similar to the following one:
\begin{lemma}
Let $A$ be an $m\times n$ matrix with i.i.d. standard normal components. Let $\g$ and $\h$ be $m\times 1$ and $n\times 1$ vectors, respectively, with i.i.d. standard normal components. Also, let $g$ be a standard normal random variable and let $c_3$ be a positive constant. Then
\begin{multline}
E(\max_{\|\x\|_2=1, \1^T\d=(1-f_{wb}) m,\d_i\in\{0,1\}}\min_{\|\lambda\|_2=1,\lambda_i\geq 0}e^{-c_3(-\lambda^T\diag(\d) H\x + g +\kappa\lambda^T\diag(\d)\1)})\\\leq E(\max_{\|\x\|_2=1, \1^T\d=(1-f_{wb}) m,\d_i\in\{0,1\}}\min_{\|\lambda\|_2=1,\lambda_1\geq 0}e^{-c_3(\g^T\diag(\d)\lambda+\h^T\x+\kappa\lambda^T\diag(\d)\1)}).\label{eq:negexplemmalow}
\end{multline}\label{lemma:negexplemmalow}
\end{lemma}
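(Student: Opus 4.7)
The plan is to follow the exact template of Lemma \ref{lemma:negproblemma}, but to invoke the second (max--min, decreasing-$\psi$) statement of Theorem \ref{thm:Gordonneg1} in place of Theorem \ref{thm:Gordonmesh1}. Concretely, I would introduce the two centered Gaussian processes
\begin{equation*}
Y_{ij} = -(\lambda^{(j)})^T \diag(\d^{(i)}) H \x^{(i)} + g, \qquad X_{ij} = \g^T \diag(\d^{(i)}) \lambda^{(j)} + \h^T \x^{(i)},
\end{equation*}
where $i$ indexes admissible pairs $(\x^{(i)},\d^{(i)})$ with $\|\x^{(i)}\|_2=1$, $\1^T\d^{(i)}=(1-f_{wb})m$, $\d^{(i)}_r\in\{0,1\}$, and $j$ indexes admissible $\lambda^{(j)}$ with $\|\lambda^{(j)}\|_2=1$, $\lambda^{(j)}_r\geq 0$.

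The first step is to verify that $(X_{ij},Y_{ij})$ satisfy the three covariance conditions of Theorem \ref{thm:Gordonneg1}. These computations are essentially identical to \eqref{eq:negexplemmaproof2}--\eqref{eq:negexplemmaproof4} in the proof of Lemma \ref{lemma:negproblemma}: the sign flip on $H$ in $Y_{ij}$ is harmless since $-H\stackrel{d}{=}H$, and the $\{0,1\}$-valuedness of $\d^{(i)}$ gives $(\d^{(i)}_r)^2=\d^{(i)}_r$. The key nonnegativity becomes
\begin{equation*}
EY_{ij}Y_{lk}-EX_{ij}X_{lk} = \bigl(1-(\x^{(l)})^T\x^{(i)}\bigr)\bigl(1-(\lambda^{(j)})^T\diag(\d^{(i)})\diag(\d^{(l)})\lambda^{(k)}\bigr)\geq 0,
\end{equation*}
each factor being nonnegative by Cauchy--Schwarz on unit vectors and by $\diag(\d^{(i)})\diag(\d^{(l)})\preceq I$ combined with $\|\lambda\|_2=1$.

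Next, I would absorb the deterministic $\kappa\lambda^T\diag(\d)\1$ shift present in both exponents into the comparison function by setting, for each index pair $(i,j)$,
\begin{equation*}
\psi_{ij}(x) = \exp\bigl(-c_3 x - c_3\kappa(\lambda^{(j)})^T\diag(\d^{(i)})\1\bigr),
\end{equation*}
which is strictly decreasing in $x$ for $c_3>0$ irrespective of the $(i,j)$-dependent deterministic shift. Applying the max--min statement of Theorem \ref{thm:Gordonneg1} then delivers $E(\max_i\min_j\psi_{ij}(X_{ij}))\geq E(\max_i\min_j\psi_{ij}(Y_{ij}))$, which, after unpacking the definitions, is exactly \eqref{eq:negexplemmalow}.

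The remaining (and main, though routine) technical issue is that Theorem \ref{thm:Gordonneg1} is stated for finite index sets, whereas $\x$ and $\lambda$ here range over compact continuous sets; the $\d$-index is already finite and raises no issue. I would handle this exactly as in the proof of Lemma 3.1 of \cite{Gordon88} and the pointer already given in the proof of Lemma \ref{lemma:negproblemma}: pass through a standard $\epsilon$-net discretization, apply the finite-dimensional theorem, and then take the limit using continuity of the processes and of $\psi_{ij}$.
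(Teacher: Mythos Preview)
Your proposal is correct and follows exactly the approach the paper intends: it combines the covariance computations \eqref{eq:negexplemmaproof2}--\eqref{eq:negexplemmaproof4} from the proof of Lemma \ref{lemma:negproblemma} with the max--min, decreasing-$\psi$ part of Theorem \ref{thm:Gordonneg1}, absorbing the deterministic $\kappa$-shift into $\psi_{ij}$ and handling the continuous index sets by $\epsilon$-net discretization. The paper's own proof is just a pointer to this very combination, so your write-up is simply a fleshed-out version of it.
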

\begin{proof}
The proof is a combination of Theorem \ref{thm:Gordonneg1} and the proof of Lemma \ref{lemma:negproblemma}. We will omit the details as they are pretty much the same as the those in the proof of Lemma \ref{lemma:negproblemma} and the corresponding lemmas in \cite{StojnicMoreSophHopBnds10,StojnicGardSphNeg13}. However, we do mention that the main difference between this lemma and the corresponding ones in \cite{StojnicMoreSophHopBnds10,StojnicGardSphNeg13} is in the structure of the sets of allowed values for $\x$, $\d$, and $\lambda$. However, such a difference introduces no structural changes in the proof.
\end{proof}

Following step by step what was done after Lemma 3 in \cite{StojnicMoreSophHopBnds10} one arrives at the following analogue of \cite{StojnicMoreSophHopBnds10}'s equation $(57)$:
\begin{multline}
E(\min_{\|\x\|_2=1, \1^T\d=(1-f_{wb}) m,\d_i\in\{0,1\}}\max_{\|\lambda\|_2=1,\lambda_i\geq 0}(-\lambda^T \diag(\d)H\x+\kappa\lambda^T\diag(\d)\1))\\\hspace{-.3in}\geq
\frac{c_3}{2}-\frac{1}{c_3}\log(E(\max_{\|\x\|_2=1}(e^{-c_3\h^T\x})))
-\frac{1}{c_3}\log(E(\max_{\1^T\d=(1-f_{wb}) m,\d_i\in\{0,1\}}\min_{\|\lambda\|_2=1,\lambda_i\geq 0}(e^{-c_3(\g^T\diag(\d)\lambda+\kappa\lambda^T\diag(\d)\1)}))).\\\label{eq:chneg8}
\end{multline}
Let $c_3=c_3^{(s)}\sqrt{n}$ where $c_3^{(s)}$ is a constant independent of $n$. Then (\ref{eq:chneg8}) becomes
\begin{multline}
\frac{E(\min_{\|\x\|_2=1, \1^T\d=(1-f_{wb}) m,\d_i\in\{0,1\}}\max_{\|\lambda\|_2=1,\lambda_i\geq 0}(-\lambda^T \diag(\d)H\x+\kappa\lambda^T\diag(\d)\1))}{\sqrt{n}}
\\\geq
\frac{c_3^{(s)}}{2}-\frac{1}{nc_3^{(s)}}\log(E(\max_{\|\x\|_2=1}(e^{-c_3^{(s)}\sqrt{n}\h^T\x}))) \\
 -  \frac{1}{nc_3^{(s)}}\log(E(\max_{\1^T\d=(1-f_{wb}) m,\d_i\in\{0,1\}}\min_{\|\lambda\|_2=1,\lambda_i\geq 0}(e^{-c_3^{(s)}\sqrt{n}(\g^T\diag(\d)\lambda+\kappa\lambda^T\diag(\d)\1)}))) \\
 = -(-\frac{c_3^{(s)}}{2}+I_{sph}(c_3^{(s)})+I_{wb}(c_3^{(s)},\alpha,\kappa,f_{wb})),\label{eq:chneg9}
\end{multline}
where
\begin{eqnarray}
I_{sph}(c_3^{(s)}) & = & \frac{1}{nc_3^{(s)}}\log(E(\max_{\|\x\|_2=1}(e^{-c_3^{(s)}\sqrt{n}\h^T\x})))\nonumber \\
I_{wb}(c_3^{(s)},\alpha,\kappa,f_{wb}) & = & \frac{1}{nc_3^{(s)}}\log(E(\max_{\1^T\d=(1-f_{wb}) m,\d_i\in\{0,1\}}\min_{\|\lambda\|_2=1,\lambda_i\geq 0}(e^{-c_3^{(s)}\sqrt{n}(\g^T\diag(\d)\lambda+\kappa\lambda^T\diag(\d)\1)}))).\nonumber \\\label{eq:defIs}
\end{eqnarray}
Moreover, \cite{StojnicMoreSophHopBnds10} also established
\begin{equation}
\hspace{-.5in}I_{sph}(c_3^{(s)}) = \frac{1}{nc_3^{(s)}}\log(E(\max_{\|\x\|_2=1}(e^{-c_3^{(s)}\sqrt{n}\h^T\x})))
 \doteq\widehat{\gamma^{(s)}}-\frac{1}{2c_3^{(s)}}\log(1-\frac{c_3^{(s)}}{2\widehat{\gamma^{(s)}}}),\label{eq:ubmorsoph}
\end{equation}
where
\begin{equation}
\widehat{\gamma^{(s)}}=\frac{2c_3^{(s)}+\sqrt{4(c_3^{(s)})^2+16}}{8},\label{eq:gamaiden3}
\end{equation}
and $\doteq$ stands for equality when $n\rightarrow\infty$ (as mentioned in \cite{StojnicMoreSophHopBnds10}, $\doteq$ in (\ref{eq:ubmorsoph}) is exactly what was shown in \cite{SPH}.

To be able to use the bound in (\ref{eq:chneg8}) we would also need a characterization of $I_{wb}(c_3^{(s)},\alpha,\kappa,f_{wb})$. Below we provide such a characterization. We start with the following observation that easily follows from (\ref{eq:defrferr1})
\begin{eqnarray}
I_{wb}(c_3^{(s)},\alpha,\kappa,f_{wb}) & = & \frac{1}{nc_3^{(s)}}\log(E(\max_{\1^T\d=(1-f_{wb}) m,\d_i\in\{0,1\}}\min_{\|\lambda\|_2=1,\lambda_i\geq 0}(e^{-c_3^{(s)}\sqrt{n}(\g^T\diag(\d)\lambda+\kappa\lambda^T\diag(\d)\1)})))\nonumber \\
& = & \frac{1}{nc_3^{(s)}}\log(E(e^{-c_3^{(s)}\sqrt{n}f_{err}^{(r)}(\kappa)}))
\label{eq:Iwb1}
\end{eqnarray}
From (\ref{eq:defrferr3}) one then has
\begin{eqnarray}
f_{err}^{(r)}(\kappa)
& \geq & \sqrt{\max_{\nu_{wb}\geq 0}\left (\sum_{i=1}^{m}\left (\min(0,\max((\g_i+\kappa),0)^2-\nu_{wb})\right )+\nu_{wb}(1-f_{wb})m\right )}\nonumber \\
& = & \min_{\gamma_{wb}\geq 0}\left (\frac{\max_{\nu_{wb}\geq 0}\left (\sum_{i=1}^{m}\left (\min(0,\max((\g_i+\kappa),0)^2-\nu_{wb})\right )+\nu_{wb}(1-f_{wb})m\right )}{4\gamma_{wb}}+ \gamma_{wb}\right )\nonumber \\
& = & \min_{\gamma_{wb}\geq 0}\max_{\nu_{wb}\geq 0}\left (\frac{f_1(\nu_{wb},\g,\kappa,f_{wb})}{4\gamma_{wb}}+ \gamma_{wb}\right ),
\label{eq:Iwb2}
\end{eqnarray}
where
\begin{equation}
f_1(\nu_{wb},\g,\kappa,f_{wb})=\left (\sum_{i=1}^{m}\left (\min(0,\max((\g_i+\kappa),0)^2-\nu_{wb})\right )+\nu_{wb}(1-f_{wb})m\right ).\label{eq:deff1}
\end{equation}
Then a combination of (\ref{eq:Iwb1}), (\ref{eq:Iwb2}), and (\ref{eq:deff1}) gives
\begin{multline}
\hspace{-.5in}I_{wb}(c_3^{(s)},\alpha,\kappa,f_{wb})
 =  \frac{1}{nc_3^{(s)}}\log(E(e^{-c_3^{(s)}\sqrt{n}f_{err}^{(r)}(\kappa)}))\leq
 \frac{1}{nc_3^{(s)}}\log(E(e^{-c_3^{(s)}\sqrt{n}\min_{\gamma_{wb}\geq 0}\max_{\nu_{wb}\geq 0}\left (\frac{f_1(\nu_{wb},\g,\kappa,f_{wb})}{4\gamma_{wb}}+ \gamma_{wb}\right )}))\\
\hspace{-.5in}\doteq  \frac{1}{nc_3^{(s)}}\min_{\gamma_{wb}\geq 0}\max_{\nu_{wb}\geq 0}\log(E(e^{-c_3^{(s)}\sqrt{n}\left (\frac{f_1(\nu_{wb},\g,\kappa,f_{wb})}{4\gamma_{wb}}+ \gamma_{wb}\right )}))=
\min_{\gamma_{wb}\geq 0}\max_{\nu_{wb}\geq 0}(-\frac{\gamma_{wb}}{\sqrt{n}}+\frac{1}{nc_3^{(s)}}\log(Ee^{-c_3^{(s)}\sqrt{n}(\frac{f_1(\nu_{wb},\g,\kappa,f_{wb})}{4\gamma_{wb}})}))\\
=\min_{\gamma_{wb}\geq 0}\max_{\nu_{wb}\geq 0}(-\frac{\alpha\nu_{wb}(1-f_{wb})\sqrt{n}}{4\gamma_{wb}}-\frac{\gamma_{wb}}{\sqrt{n}}+
\frac{\alpha}{c_3^{(s)}}\log(Ee^{-c_3^{(s)}\sqrt{n}(\frac{\left (\min(0,\max((\g_i+\kappa),0)^2-\nu_{wb})\right )}{4\gamma_{wb}})}))\\
,\label{eq:gamaiden1sec}
\end{multline}
where $\doteq$ denotes an equality as $n\rightarrow \infty$ and follows based on considerations similar to those presented in \cite{SPH} (and discussed in \cite{StojnicMoreSophHopBnds10}). Since here the things may appear seemingly more involved (than say in \cite{StojnicMoreSophHopBnds10})), one can adopt a simpler way of reasoning: namely, since the above inequalities hold for any $\nu_{wb}\geq 0$ one can fix one of them (say, exactly the one that we will later on determine as the optimal one) and then apply the mechanism from \cite{SPH} only to the minimization over $\gamma_{wb}\geq 0$ which is exactly what is the done in \cite{SPH} and references recalled on therein. In that case $\doteq$ should be replaced with an $\leq $ inequality which is enough for our purposes here (however, tightening over $\nu_{wb}$ would be enough to obtain the above mentioned limiting equality, i.e. $\doteq$). Moreover, following what we mentioned in the previous section, the above inequalities can in fact be shown to be equalities in $n\rightarrow \infty$ limit, since the inequality in (\ref{eq:Iwb2}) can in fact be replaced by an equality as well. As mentioned in the previous section though, we skip showing this as it is not really needed for the results that we will present below (showing this is not really difficult but in our opinion diverts attention from the final results which it improves in no way).

Now if one sets $\gamma_{wb}=\gamma_{wb}^{(s)}\sqrt{n}$ then (\ref{eq:gamaiden1sec}) gives
\begin{eqnarray}
\hspace{-.3in}I_{wb}(c_3^{(s)},\alpha,\kappa,f_{wb}) & \leq &  \min_{\gamma_{wb}\geq 0}\max_{\nu_{wb}\geq 0}(-\frac{\alpha\nu_{wb}(1-f_{wb})\sqrt{n}}{4\gamma_{wb}}-\frac{\gamma_{wb}}{\sqrt{n}}+
\frac{\alpha}{c_3^{(s)}}\log(Ee^{-c_3^{(s)}\sqrt{n}(\frac{\left (\min(0,\max((\g_i+\kappa),0)^2-\nu_{wb})\right )}{4\gamma_{wb}})}))\nonumber \\
& = &  \min_{\gamma_{wb}^{(s)}\geq 0}\max_{\nu_{wb}\geq 0}(-\frac{\alpha\nu_{wb}(1-f_{wb})}{4\gamma_{wb}^{(s)}}-\gamma_{wb}^{(s)}+
\frac{\alpha}{c_3^{(s)}}\log(I_{wb}^{(1)}(c_3^{(s)},\gamma_{per}^{(s)},\nu_{wb},\kappa))),\label{eq:gamaiden2sec}
\end{eqnarray}
where
\begin{equation}
I_{wb}^{(1)}(c_3^{(s)},\gamma_{per}^{(s)},\nu_{wb},\kappa) = Ee^{-c_3^{(s)}\frac{\left (\min(0,\max((\g_i+\kappa),0)^2-\nu_{wb})\right )}{4\gamma_{wb}^{(s)}}}.\label{eq:defI1I2sec}
\end{equation}
A combination of (\ref{eq:gamaiden2sec}) and (\ref{eq:defI1I2sec}) is then enough to enable us to use the bound in (\ref{eq:chneg8}). However, one can be a bit more explicit when it comes to $I_{wb}^{(1)}(c_3^{(s)},\gamma_{per}^{(s)},\nu_{wb},\kappa)$. Set
\begin{eqnarray}
p & = & 1+\frac{c_3^{(s)}}{2\gamma_{wb}^{(s)}}\nonumber \\
q & = & \frac{c_3^{(s)}\kappa}{2\gamma_{wb}^{(s)}}\nonumber \\
r & = & \frac{c_3^{(s)}\kappa^2}{4\gamma_{wb}^{(s)}}\nonumber \\
s_1 & = & -\kappa\sqrt{p}+\frac{q}{\sqrt{p}} \nonumber \\
s_2 & = & (\sqrt{\nu_{wb}}-\kappa)\sqrt{p}+\frac{q}{\sqrt{p}} \nonumber \\
C & = & \frac{exp\left (\frac{q^2}{2p}-r\right )exp\left (\frac{c_3^{(s)}\nu_{wb}}{4\gamma_{wb}^{(s)}}\right )}{\sqrt{p}}.\label{eq:helpdeflow}
\end{eqnarray}
Further set
\begin{eqnarray}
I_{wb}^{(1,1)}(c_3^{(s)},\gamma_{per}^{(s)},\nu_{wb},\kappa) & = & exp\left (\frac{c_3^{(s)}\nu_{wb}}{4\gamma_{wb}^{(s)}}\right )\frac{1}{2}\mbox{erfc}\left (\frac{\kappa}{\sqrt{2}}\right ) \nonumber \\
I_{wb}^{(1,2)}(c_3^{(s)},\gamma_{per}^{(s)},\nu_{wb},\kappa) & = & \frac{C}{2}\left (\mbox{erfc}\left (\frac{s_1}{\sqrt{2}}\right )-\mbox{erfc}\left (\frac{s_2}{\sqrt{2}}\right )\right ) \nonumber \\
I_{wb}^{(1,3)}(\nu_{wb},\kappa) & = & \frac{1}{2}\mbox{erfc}\left (\frac{(\sqrt{\nu_{wb}}-\kappa)}{\sqrt{2}}\right ).\label{eq:defIwb11slow}
\end{eqnarray}
Then solving the integrals in (\ref{eq:defI1I2sec}) gives
\begin{equation}
I_{wb}^{(1)}(c_3^{(s)},\gamma_{per}^{(s)},\nu_{wb},\kappa)=I_{wb}^{(1,1)}(c_3^{(s)},\gamma_{per}^{(s)},\nu_{wb},\kappa)
+I_{wb}^{(1,2)}(c_3^{(s)},\gamma_{per}^{(s)},\nu_{wb},\kappa)
+I_{wb}^{(1,3)}(\nu_{wb},\kappa).\label{eq:Iwb1finallow}
\end{equation}

We summarize the results from this section in the following theorem.

\begin{theorem}
Let $H$ be an $m\times n$ matrix with $\{-1,1\}$ i.i.d. standard normal components. Let $n$ be large and let $m=\alpha n$, where $\alpha>0$ is a constant independent of $n$. Let $\xi_{wb}$ be as in (\ref{eq:uncorminmax}) and let $\kappa<0$ be a scalar constant independent of $n$. Let all $\epsilon$'s be arbitrarily small constants independent of $n$. Further, let $\g_i$ be a standard normal random variable. Set
\begin{equation}
\widehat{\gamma^{(s)}}=\frac{2c_3^{(s)}+\sqrt{4(c_3^{(s)})^2+16}}{8},\label{eq:gamaliftthmlow}
\end{equation}
and
\begin{equation}
I_{sph}(c_3^{(s)}) = \widehat{\gamma^{(s)}}-\frac{1}{2c_3^{(s)}}\log(1-\frac{c_3^{(s)}}{2\widehat{\gamma^{(s)}}}).\label{eq:Isphthmlow}
\end{equation}
Let $I_{wb}^{(1)}(c_3^{(s)},\gamma_{wb}^{(s)},\nu_{wb},\kappa)$ be defined through (\ref{eq:helpdeflow})-(\ref{eq:Iwb1finallow}). Set
\begin{equation}
\widehat{I_{wb}}(c_3^{(s)},\alpha,\kappa,f_{wb})= \min_{\gamma_{wb}^{(s)}\geq 0}\max_{\nu_{wb}\geq 0}(-\frac{\alpha\nu_{wb}(1-f_{wb})}{4\gamma_{wb}^{(s)}}-\gamma_{wb}^{(s)}+
\frac{\alpha}{c_3^{(s)}}\log(I_{wb}^{(1)}(c_3^{(s)},\gamma_{per}^{(s)},\nu_{wb},\kappa))).\label{eq:Iperthmlow}
\end{equation}
If $\alpha$ is such that
\begin{equation}
-\widehat{\xi_{wb}^{(l,lift)}}=\min_{c_3^{(s)}\geq 0}(-\frac{c_3^{(s)}}{2}+I_{sph}(c_3^{(s)})+\widehat{I_{wb}}(c_3^{(s)},\alpha,\kappa,f_{wb}))<0,\label{eq:condliftsphnegthmlow}
\end{equation}
then (\ref{eq:feas1err}) is infeasible with overwhelming probability.
\label{thm:liftnegsphperlow}
\end{theorem}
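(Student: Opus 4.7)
The plan is to exponentiate the min-max in (\ref{eq:uncorminmaxerrlow}), apply the Gordon-type comparison of Theorem \ref{thm:Gordonneg1} via Lemma \ref{lemma:negexplemmalow} so as to decouple $H$ into the simpler Gaussian vectors $\g$ and $\h$, and then take logarithms and let $c_3=c_3^{(s)}\sqrt{n}$ so that the exponential comparison turns into a lower bound on $E\xi_{wb}/\sqrt{n}$. Following the mechanism already used in \cite{StojnicMoreSophHopBnds10,StojnicGardSphNeg13}, the comparison produces the additive splitting displayed in (\ref{eq:chneg9}), namely $E\xi_{wb}/\sqrt{n} \geq \tfrac{c_3^{(s)}}{2} - I_{sph}(c_3^{(s)}) - I_{wb}(c_3^{(s)},\alpha,\kappa,f_{wb})$ for every $c_3^{(s)}\geq 0$. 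The spherical term $I_{sph}$ is handled verbatim by the computation in \cite{StojnicMoreSophHopBnds10,SPH}, yielding the closed form (\ref{eq:ubmorsoph})--(\ref{eq:gamaiden3}).

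The bulk of the work is to obtain the closed-form bound (\ref{eq:Iperthmlow}) on $I_{wb}$. Here I would first rewrite $I_{wb}$ using the identity (\ref{eq:Iwb1}), which expresses it directly in terms of the moment generating function of $-f_{err}^{(r)}(\kappa)$. Then I would carry out the same manipulation that was used in Section \ref{sec:sphpererrorsrig} to derive (\ref{eq:defrferr3}): after solving the inner max in $\lambda$ in closed form, Lagrangian relaxation of the cardinality constraint $\1^T\d=(1-f_{wb})m$ with multiplier $\nu_{wb}$ decouples the problem over the coordinates of $\g$, while the square root is linearized via $\sqrt{x}=\min_{\gamma_{wb}\geq 0}(x/(4\gamma_{wb})+\gamma_{wb})$. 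This produces the representation (\ref{eq:Iwb2}) of $f_{err}^{(r)}(\kappa)$ as $\min_{\gamma_{wb}}\max_{\nu_{wb}}$ of a sum of i.i.d.\ terms plus a deterministic piece.

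The key technical step is then to push the expectation inside the min-max in $(\gamma_{wb},\nu_{wb})$. This is the concentration/Laplace-type step that is implicit in the $\doteq$ of (\ref{eq:gamaiden1sec}), and is the main obstacle: the argument that is safest to run is to first fix $\nu_{wb}$ at the value that will later turn out to be optimal, and apply the large-deviation/saddle-point computation of \cite{SPH} only to the minimization over $\gamma_{wb}=\gamma_{wb}^{(s)}\sqrt{n}$, which converts the $\log$-mgf of a sum of i.i.d.\ terms into $\alpha$ times the single-coordinate $\log$-mgf; this is enough to yield the upper bound (\ref{eq:gamaiden2sec}). The inner expectation in (\ref{eq:defI1I2sec}) is a one-dimensional Gaussian integral whose integrand is piecewise defined on the three regions $\g_i<-\kappa$, $-\kappa\leq\g_i\leq\sqrt{\nu_{wb}}-\kappa$, and $\g_i>\sqrt{\nu_{wb}}-\kappa$; completing the square with the constants $p,q,r,s_1,s_2,C$ of (\ref{eq:helpdeflow}) reduces each piece to a complementary error function, producing the decomposition (\ref{eq:defIwb11slow})--(\ref{eq:Iwb1finallow}).

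Combining these pieces gives $E\xi_{wb}/\sqrt{n}\geq \widehat{\xi_{wb}^{(l,lift)}}$ for every $c_3^{(s)}\geq 0$, so optimizing over $c_3^{(s)}$ yields (\ref{eq:condliftsphnegthmlow}): the hypothesis $\widehat{\xi_{wb}^{(l,lift)}}>0$ thus forces $E\xi_{wb}$ to grow linearly in $\sqrt{n}$. Finally, the map $H\mapsto\xi_{wb}(H)$ from (\ref{eq:uncorminmaxerrlow}) is $1$-Lipschitz in the Frobenius norm of $H$ (both $\|\lambda\|_2$ and $\|\x\|_2$ are unit and $\|\diag(\d)\lambda\|_2\leq 1$), so standard Gaussian concentration gives $|\xi_{wb}-E\xi_{wb}|\leq \epsilon\sqrt{n}$ with overwhelming probability; hence $\xi_{wb}>0$ with overwhelming probability, which by the discussion following (\ref{eq:uncorminmaxerrlow}) forces (\ref{eq:feas1err}) to be infeasible with overwhelming probability.
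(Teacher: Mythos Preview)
Your proposal is correct and follows essentially the same route as the paper: apply Lemma~\ref{lemma:negexplemmalow} (via Theorem~\ref{thm:Gordonneg1}) to obtain the exponential comparison, take logs and scale $c_3=c_3^{(s)}\sqrt{n}$ to reach the splitting (\ref{eq:chneg9}), evaluate $I_{sph}$ by the computation of \cite{StojnicMoreSophHopBnds10,SPH}, bound $I_{wb}$ through (\ref{eq:Iwb1})--(\ref{eq:Iwb1finallow}) using the Lagrangian relaxation and square-root linearization already developed in Section~\ref{sec:sphpererrorsrig}, and finally optimize over $c_3^{(s)}\geq 0$. The only point where you add something the paper leaves implicit is the last step: the paper simply remarks at the outset of Section~\ref{sec:sphpererrorslow} that $\xi_{wb}$ concentrates around its mean, whereas you make this explicit via the $1$-Lipschitz dependence of $\xi_{wb}(H)$ on $H$ and Gaussian concentration; this is a valid and standard way to justify that remark.
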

\begin{proof}
Follows from the previous discussion by combining (\ref{eq:uncorminmaxerrlow}) and (\ref{eq:chneg9}), and  by noting that the bound given in (\ref{eq:chneg9}) holds for any $c_3^{(s)}\geq 0$ and could therefore be tightened by additionally optimizing over $c_3^{(s)}\geq 0$.
\end{proof}

The results one can obtain for the storage capacity based on the above theorem are presented in Figure \ref{fig:fminalphacoptc3}. Similarly to what we mentioned when presenting the results for the negative spherical perceptron (namely, those shown in Figure \ref{fig:liftsphneg}) the results presented in Figure \ref{fig:fminalphacoptc3} should be taken only as an illustration. They are obtained as a result of a numerical optimization. Remaining finite precision errors are of course possible and could affect the validity of the obtained results (as in Section \ref{sec:negphper} we do believe though that this is not the case). Either way, we would like to emphasize once again that the results presented in Theorem \ref{thm:liftnegsphperlow} are completely mathematically rigorous. Their representation given in Figure \ref{fig:fminalphacoptc3} may have been a bit imprecise due to numerical computations needed to obtain these plots.
\begin{figure}[htb]
%%%%%\begin{minipage}[b]{1.0\linewidth}
\centering
\centerline{\epsfig{figure=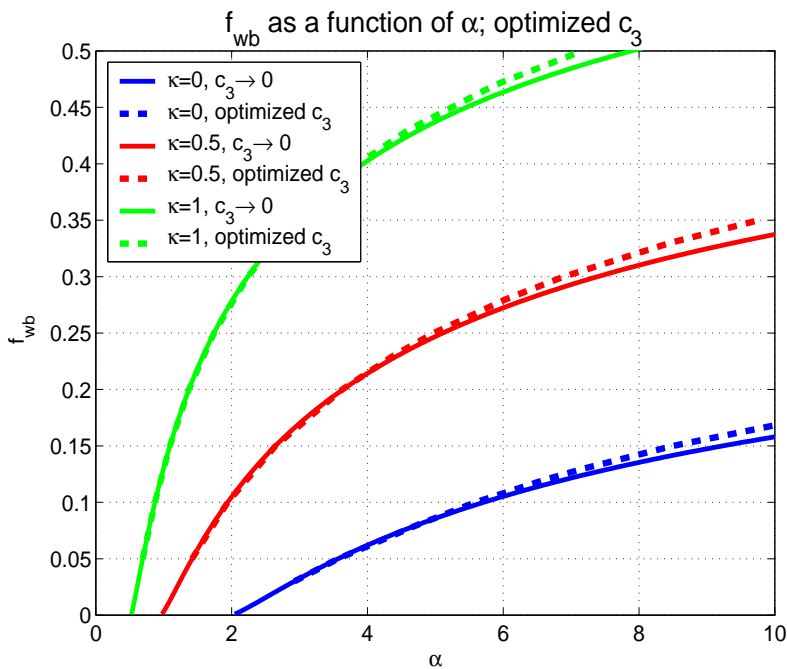,width=10.5cm,height=9cm}}
%%%%%%\end{minipage}
\caption{$f_{wb}$ as a function of $\alpha$ (or alternatively, $\alpha$ as a function of $f_{wb}$) for $\kappa\in\{0,0.5,1\}$; optimized $c_3^{(s)}$}
\label{fig:fminalphacoptc3}
\end{figure}

As, for the plot actually shown in Figure \ref{fig:fminalphacoptc3}, we basically showed the same type of plots we have already shown in Figure \ref{fig:fminalphacc30} (these are denoted by $c_3\rightarrow 0$ label as they rely on Theorem \ref{thm:Stoc30err} which indeed follows from Theorem \ref{thm:liftnegsphperlow} assuming that $c_3\rightarrow 0$). In addition to that we have shown what kind of effect on these plots the results of Theorem \ref{thm:liftnegsphperlow} may have (these are denoted by optimized $c_3$ label to indicate that they are obtained based on the set of results given in Theorem \ref{thm:liftnegsphperlow} which ultimately assumes an optimization over (a scaled version of) $c_3\geq 0$). To be even more specific, we showed the relation between the storage capacity $\alpha$ and the maximal fraction of allowed errors $f_{wb}$. We did so for three different values of $\kappa$, namely for $\kappa\in\{0,0.5,1\}$ and we did so for the case when $c_3^{(s)}$ in Theorem \ref{thm:liftnegsphperlow} is assumed to tend to zero and for the case when it is assumed to take the optimal value predicted by the results of Theorem \ref{thm:liftnegsphperlow}. The dotted curves indicate that an improvement in the storage capacities characterization may be possible. In other words, it is possible that in certain range of parameters $(f_{wb},\kappa)$ the storage capacity results obtained based on Theorem \ref{thm:liftnegsphperlow} may indeed be lower than those obtained based on Theorem \ref{thm:Stoc30err}. Since the results obtained in Theorem \ref{thm:Stoc30err} match the predictions obtained based on the replica approach from statistical mechanics (assuming replica symmetry) this then basically indicates that the true values of the storage capacities when the errors in stored patterns are allowed
may be lower than what replica symmetry statistical mechanics approach predicts (and what our results from Section \ref{sec:sphpererrors}, essentially Theorem \ref{thm:Stoc30err}, confirm as rigorous upper bounds).

On the other hand, while we view the presented improvement (i.e. lowering) of the storage capacity as conceptually substantial it is practically not so easily visible on the given plots. For that reason we in Tables \ref{tab:liftsphnegtab1}-\ref{tab:liftsphnegtab6} below also give the concrete values that we obtained for the storage capacities (and all optimizing parameters appearing in Theorem \ref{thm:liftnegsphperlow}). To do so we selected a range of parameters $f_{wb}$ for each of three $\kappa$'s where one starts seeing a difference in the capacity values. Also, as we have just mentioned above, since the numerical precision (especially the optimization over $\gamma_{wb}$) may have jeopardized the rigorousness of the results presented in Figure \ref{fig:fminalphacoptc3} we provide the values of all optimizing parameters (also, as mentioned earlier, we do not believe that the numerical work introduced any substantial inaccuracies).
We denote by $\alpha_{c,wb}^{(u,low)}$ the smallest $\alpha$ such that (\ref{eq:condliftsphnegthmlow}) holds. Along the same lines we denote by $\alpha_{c,wb}^{(u)}$ the smallest $\alpha$ such that (\ref{eq:condalphauncorlberr}) holds. In fact, $\alpha_{c,wb}^{(u)}$ can also be obtained from Theorem \ref{thm:Stoc30err} by taking $c_3^{(s)}\rightarrow 0$ and consequently $\gamma_{wb}^{(s)}\rightarrow\frac{1}{2}$ (of course as mentioned on numerous occasions in the previous section, such an $\alpha_{c,wb}^{(u)}$ matches $\alpha_{c,wb}^{(gar)}$ given in (\ref{eq:alphafromxgar}) and obtained in \cite{GarDer88}). Moreover, $\nu_{wb}$ obtained based on results of Theorem \ref{thm:liftnegsphperlow} with $c_3^{(s)}\rightarrow 0$ (and consequently $\gamma_{wb}^{(s)}\rightarrow\frac{1}{2}$) corresponds to $\widehat{\nu_{wb}}$ from (\ref{eq:thmerrc30}) and, as argued in the previous section, matches $x^2$ in (\ref{eq:alphafromxgar}).

\begin{table}%[t]
\caption{Lowered upper bounds on $\alpha_{c,wb}$ -- lower $\alpha,f_{wb}$ regime; $\kappa=0$, optimized parameters}\vspace{.1in}
\hspace{-0in}\centering
\begin{tabular}{||c||c|c|c|c||}\hline\hline
 $f_{wb}$                                              & $0.05$   & $0.08$   & $0.01$   & $0.12$    \\ \hline\hline
 $\widehat{\xi_{wb}^{(l,lift)}}$                       & $0.0000$ & $0.0000$ & $5.475e-07$  & $3.389e-06$  \\ \hline
 $c_{3}^{(s)}$                                         & $0.0000$ & $0.0000$ & $0.7907$  & $1.1211$   \\ \hline
 $\gamma_{wb}^{(s)}$                                   & $0.5000$ & $0.5000$ & $0.3400$  & $0.2929$   \\ \hline
 $\nu_{wb}$                                            & $2.7055$ & $1.9742$ & $1.0056$  & $0.7055$   \\ \hline
 $\alpha_{c,wb}^{(u,low)}$                             & $\mathbf{3.5669}$ & $\mathbf{4.7368}$ & $\mathbf{5.5910}$ &
 $\mathbf{6.6138}$  \\ \hline\hline
 $\nu_{wb}$ ($c_3^{(s)}\rightarrow 0$,
 $\gamma_{wb}^{(s)}\rightarrow \frac{1}{2}$)                        & $2.7055$ & $1.9742$ & $1.6423$  & $1.3806$ \\ \hline
 $\alpha_{c,wb}^{(u)}$ ($c_3^{(s)}\rightarrow 0$,
 $\gamma_{wb}^{(s)}\rightarrow \frac{1}{2}$)                        & $3.5669$ & $4.7368$ & $5.7113$ & $6.8987$  \\ \hline\hline
\end{tabular}
\label{tab:liftsphnegtab1}
\end{table}

\begin{table}%[t]
\caption{Lowered upper bounds on $\alpha_{c,wb}$ -- higher $\alpha,f_{wb}$ regime; $\kappa=0$, optimized parameters}\vspace{.1in}
\hspace{-0in}\centering
\begin{tabular}{||c||c|c|c|c||}\hline\hline
  $f_{wb}$                                             & $0.13$   & $0.15$   & $0.18$   & $0.20$    \\ \hline\hline
 $\widehat{\xi_{wb}^{(l,lift)}}$                       & $7.155e-6$ & $3.165e-06$ & $3.785e-06$  & $7.876e-07$  \\ \hline
 $c_{3}^{(s)}$                                         & $1.2827$ & $1.6059$ & $2.1064$  & $2.4613$   \\ \hline
 $\gamma_{wb}^{(s)}$                                   & $0.2733$ & $0.2398$ & $0.1996$  & $0.1775$   \\ \hline
 $\nu_{wb}$                                            & $0.5971$ & $0.4338$ & $0.2748$  & $0.2043$   \\ \hline
 $\alpha_{c,wb}^{(u,low)}$                             & $\mathbf{7.1892}$ & $\mathbf{8.4974}$ & $\mathbf{10.9700}$ & $\mathbf{13.0802}$  \\ \hline\hline
 $\nu_{wb}$ ($c_3^{(s)}\rightarrow 0$,
 $\gamma_{wb}^{(s)}\rightarrow \frac{1}{2}$)                        & $1.2687$ & $1.0741$ & $0.8379$  & $0.7083$ \\ \hline
 $\alpha_{c,wb}^{(u)}$ ($c_3^{(s)}\rightarrow 0$,
 $\gamma_{wb}^{(s)}\rightarrow \frac{1}{2}$)                        & $7.5920$ & $9.2296$ & $12.5300$ & $15.5332$  \\ \hline\hline
\end{tabular}
\label{tab:liftsphnegtab2}
\end{table}

\begin{table}%[t]
\caption{Lowered upper bounds on $\alpha_{c,wb}$ -- lower $\alpha,f_{wb}$ regime; $\kappa=0.5$, optimized parameters}\vspace{.1in}
\hspace{-0in}\centering
\begin{tabular}{||c||c|c|c|c||}\hline\hline
 $f_{wb}$                                              & $0.15$   & $0.20$   & $0.23$   & $0.25$    \\ \hline\hline
 $\widehat{\xi_{wb}^{(l,lift)}}$                       & $0.0000$ & $1.161e-06$ & $1.325e-06$  & $2.548e-06$  \\ \hline
 $c_{3}^{(s)}$                                         & $0.0000$ & $0.2694$ & $0.6225$  & $0.85038$   \\ \hline
 $\gamma_{wb}^{(s)}$                                   & $0.5000$ & $0.4372$ & $0.3680$  & $0.3307$   \\ \hline
 $\nu_{wb}$                                            & $2.3606$ & $1.5159$ & $1.0425$  & $0.8225$   \\ \hline
 $\alpha_{c,wb}^{(u,low)}$                             & $\mathbf{2.6452}$ & $\mathbf{3.6298}$ & $\mathbf{4.3850}$ &
 $\mathbf{4.9772}$  \\ \hline\hline
 $\nu_{wb}$ ($c_3^{(s)}\rightarrow 0$,
 $\gamma_{wb}^{(s)}\rightarrow \frac{1}{2}$)                        & $2.3606$ & $1.7999$ & $1.5347$ & $1.3794$ \\ \hline
 $\alpha_{c,wb}^{(u)}$ ($c_3^{(s)}\rightarrow 0$,
 $\gamma_{wb}^{(s)}\rightarrow \frac{1}{2}$)                        & $2.6452$ & $3.6393$ & $4.4472$ & $5.1086$  \\ \hline\hline
\end{tabular}
\label{tab:liftsphnegtab3}
\end{table}

\begin{table}%[t]
\caption{Lowered upper bounds on $\alpha_{c,wb}$ -- higher $\alpha,f_{wb}$ regime; $\kappa=0.5$, optimized parameters}\vspace{.1in}
\hspace{-0in}\centering
\begin{tabular}{||c||c|c|c|c||}\hline\hline
  $f_{wb}$                                             & $0.28$   & $0.30$   & $0.33$   & $0.35$    \\ \hline\hline
 $\widehat{\xi_{wb}^{(l,lift)}}$                       & $4.632e-06$ & $2.195e-06$ & $2.778e-06$  & $1.422e-06$  \\ \hline
 $c_{3}^{(s)}$                                         & $1.1921$ & $1.4254$ & $1.7919$  & $2.0525$   \\ \hline
 $\gamma_{wb}^{(s)}$                                   & $0.2841$ & $0.2576$ & $0.2234$  & $0.2033$   \\ \hline
 $\nu_{wb}$                                            & $0.5830$ & $0.4656$ & $0.3329$  & $0.2659$   \\ \hline
 $\alpha_{c,wb}^{(u,low)}$                             & $\mathbf{6.0383}$ & $\mathbf{6.8916}$ & $\mathbf{8.4625}$ & $\mathbf{9.7620}$  \\ \hline\hline
 $\nu_{wb}$ ($c_3^{(s)}\rightarrow 0$,
 $\gamma_{wb}^{(s)}\rightarrow \frac{1}{2}$)                        & $1.1725$ & $1.0494$ & $0.8834$  & $0.7838$ \\ \hline
 $\alpha_{c,wb}^{(u)}$ ($c_3^{(s)}\rightarrow 0$,
 $\gamma_{wb}^{(s)}\rightarrow \frac{1}{2}$)                        & $6.3476$ & $7.3889$ & $9.398$ & $11.1434$  \\ \hline\hline
\end{tabular}
\label{tab:liftsphnegtab4}
\end{table}

%%%%%%%%%%%%%%%%%%%%%%%%%%%%%%%%%%%%%%%%%%%%%%%%%%%%%%%%%%%%%%%%%%%%%%%%%%%%%%%%
\section{Conclusion}
\label{sec:conc}
%%%%%%%%%%%%%%%%%%%%%%%%%%%%%%%%%%%%%%%%%%%%%%%%%%%%%%%%%%%%%%%%%%%%%%%%%%%%%%%%

In this paper we looked at storage capacities of spherical perceptrons. Differently from the standard case when one expects perfect storage of all patterns we here consider the case when errors in storing some of the patterns may be allowed. To mathematically characterize possible errors we represent them as a fraction of the total number of stored patterns that are allowed to be memorized incorrectly. This is essentially a classical setup of storage spherical perceptron type of memories with the so-called limited errors.

Various aspects of these types of memories have been studied throughout the literature. Here we focused on the storage capacities and provided a powerful set of mechanisms that can be used to quantify these capacities in a statistical context. We first introduced a powerful mechanism that enabled us to show that the predictions obtained for these types of capacities through the replica symmetric approach of statistical mechanics are in fact rigorous upper bounds on the true capacity values. We then presented a further refinement of the mechanism that can be used to actually lower these bounds in certain range of parameters of interest. This eventually indicates that the original problem may have a substantial underlying combinatorial structure that the replica symmetry may not be able to capture.

\begin{table}%[t]
\caption{Lowered upper bounds on $\alpha_{c,wb}$ -- lower $\alpha,f_{wb}$ regime; $\kappa=1$, optimized parameters}\vspace{.1in}
\hspace{-0in}\centering
\begin{tabular}{||c||c|c|c|c||}\hline\hline
 $f_{wb}$                                              & $0.20$   & $0.25$   & $0.30$   & $0.35$    \\ \hline\hline
 $\widehat{\xi_{wb}^{(l,lift)}}$                       & $0.0000$ & $0.0000$ & $0.0000$  & $3.320e-06$  \\ \hline
 $c_{3}^{(s)}$                                         & $0.0000$ & $0.0000$ & $0.0000$  & $0.2127$   \\ \hline
 $\gamma_{wb}^{(s)}$                                   & $0.5000$ & $0.5000$ & $0.5000$  & $0.4496$   \\ \hline
 $\nu_{wb}$                                            & $3.3916$ & $2.8039$ & $2.3238$  & $1.6771$   \\ \hline
 $\alpha_{c,wb}^{(u,low)}$                             & $\mathbf{1.3715}$ & $\mathbf{1.7398}$ & $\mathbf{2.2374}$ & $\mathbf{2.9259}$  \\ \hline\hline
 $\nu_{wb}$ ($c_3^{(s)}\rightarrow 0$,
 $\gamma_{wb}^{(s)}\rightarrow \frac{1}{2}$)                        & $3.3916$ & $2.8039$ & $2.3238$ & $1.9191$ \\ \hline
 $\alpha_{c,wb}^{(u)}$ ($c_3^{(s)}\rightarrow 0$,
 $\gamma_{wb}^{(s)}\rightarrow \frac{1}{2}$)                        & $1.3715$ & $1.7398$ & $2.2374$ & $2.9313$  \\ \hline\hline
\end{tabular}
\label{tab:liftsphnegtab5}
\end{table}

\begin{table}%[t]
\caption{Lowered upper bounds on $\alpha_{c,wb}$ -- higher $\alpha,f_{wb}$ regime; $\kappa=1$, optimized parameters}\vspace{.1in}
\hspace{-0in}\centering
\begin{tabular}{||c||c|c|c|c||}\hline\hline
  $f_{wb}$                                             & $0.40$   & $0.43$   & $0.47$   & $0.50$    \\ \hline\hline
 $\widehat{\xi_{wb}^{(l,lift)}}$                       & $3.358e-06$ & $1.476e-06$ & $5.605e-06$  & $2.594e-06$  \\ \hline
 $c_{3}^{(s)}$                                         & $0.6596$ & $0.9322$ & $1.3155$  & $1.6281$   \\ \hline
 $\gamma_{wb}^{(s)}$                                   & $0.3616$ & $0.3186$ & $0.2696$  & $0.2377$   \\ \hline
 $\nu_{wb}$                                            & $1.0496$ & $0.7950$ & $0.5470$  & $0.4103$   \\ \hline
 $\alpha_{c,wb}^{(u,low)}$                             & $\mathbf{3.8664}$ & $\mathbf{4.6040}$ & $\mathbf{5.8853}$ & $\mathbf{7.1643}$  \\ \hline\hline
 $\nu_{wb}$ ($c_3^{(s)}\rightarrow 0$,
 $\gamma_{wb}^{(s)}\rightarrow \frac{1}{2}$)                        & $1.5709$ & $1.3839$ & $1.1562$  & $1.0000$ \\ \hline
 $\alpha_{c,wb}^{(u)}$ ($c_3^{(s)}\rightarrow 0$,
 $\gamma_{wb}^{(s)}\rightarrow \frac{1}{2}$)                        & $3.9355$ & $4.7662$ & $6.2858$ & $7.8879$  \\ \hline\hline
\end{tabular}
\label{tab:liftsphnegtab6}
\end{table}

Many other features of the spherical perceptrons are also of interest. They relate to their memory capacities as well as to how these memories are functioning. The results that we presented can be utilized to characterize all of these features and we will present results in these directions elsewhere. Also, the results we presented relate to a particular statistical version of the spherical perceptron. Such a version is within the frame of neural networks/statistical mechanics typically called uncorrelated. As was the case with the results we presented in \cite{StojnicGardGen13} when we studied the basics of the spherical perceptrons, the results we presented here can also be translated to cover the corresponding correlated case. While on the topic of randomness, we should emphasize that strictly speaking we instead of typical binary patterns assumed standard normal ones. This was to done to make the presentation as easy as possible. As mentioned earlier in the paper (and as discussed to a much greater detail in \cite{StojnicHopBnds10,StojnicMoreSophHopBnds10}), all results that we presented easily extend beyond the standard Gaussian setup we utilized. A way to show that would be to utilize a repetitive use of the central limit theorem. For example, a particularly simple and elegant approach in that direction would be the one of Lindeberg \cite{Lindeberg22}. Adapting our exposition to fit into the framework of the Lindeberg principle is relatively easy and in fact if one uses the elegant approach of \cite{Chatterjee06} pretty much a routine. However, as we mentioned when studying the Hopfield and Little models \cite{StojnicHopBnds10,StojnicMoreSophHopBnds10,StojnicAsymmLittBnds11}, since we did not create these techniques we chose not to do these routine generalizations.

We should also mention that in this paper we primarily focused on the behavior of the storage capacity when viewed from an analytical point of view. In other words, we focused on quantifying analytically what the capacity would be in a statistical scenario. Of course, a tone of interesting questions related to this same problem arise if one looks at it from an algorithmic point of view. For example, one may wonder how easy is to actually determine the strengths of the bonds that do achieve the storage capacity (or to be more in alignment with what we proved here, a lower bound of the storage capacity). These problems are not that easy even when the errors are not allowed. For example, if errors are not allowed, and if $\kappa\geq 0$ then computing the bonds strengths essentially boils down to solving the feasibility problem given in (\ref{eq:defprobucor1}). This problem of course can easily be cast as a convex optimization problem and solved in polynomial time. However, already as $\kappa$ transitions to $\kappa<0$ regime the feasibility problem given in (\ref{eq:defprobucor1}) may not be as easy. On the other hand, when the errors are allowed one faces the same type of concern when $\kappa<0$. Moreover, when the errors are allowed even the ``easy" case $\kappa\geq 0$ may not be so easy any more. Designing the algorithms that can handle all these cases seems as a somewhat challenging and interesting problem. As we mentioned above, in this paper we were mostly concerned with certain analytical properties of the spherical perceptrons and consequently did not present any considerations in the algorithmic direction. However, we do mention that one can design algorithms similar to those designed for problems considered in \cite{StojnicUpperSec13}. Since an algorithmic consideration of spherical perceptrons is an important topic on its own, we will present a more detailed discussion in this direction in a separate paper.

%\newpage1
%\setcounter{page}{1}
\begin{singlespace}
\bibliographystyle{plain}
\bibliography{GardWrBitsRefs}
\end{singlespace}

\end{document}